\documentclass[a4paper]{article}

\usepackage[utf8]{inputenc}
\usepackage{lmodern}
\usepackage[T1]{fontenc}
\usepackage{microtype}
\usepackage{amsmath, amssymb, amsthm}
\usepackage{dsfont}

\numberwithin{equation}{section}

\newcommand{\Form}{\mathit{Form}}
\newcommand{\MRs}{\mathit{MR}}
\newcommand{\1}{\mathds{1}}
\newcommand{\K}{\mathds{K}}
\newcommand{\R}{{\mathds{R}}}
\newcommand{\C}{\mathds{C}}
\newcommand{\N}{\mathds{N}}
\newcommand{\A}{\mathcal{A}}
\newcommand{\Conv}{\mathcal{C}}
\renewcommand{\Re}{\operatorname{Re}}
\newcommand{\fra}{\mathfrak{a}}
\renewcommand{\mid}{\, \vert \,}
\newcommand\norm[1]{\lVert #1 \rVert}
\newcommand\abs[1]{\lvert #1 \rvert}

\theoremstyle{plain}
\newtheorem{theorem}{Theorem}[section]
\newtheorem{proposition}[theorem]{Proposition}
\newtheorem{lemma}[theorem]{Lemma}
\newtheorem{corollary}[theorem]{Corollary}

\theoremstyle{remark}

\allowdisplaybreaks
\title{Non-Autonomous Forms and Invariance}
\author{Dominik Dier}

\begin{document}

\maketitle

\begin{abstract}\label{abstract}
\noindent We generalize the Beurling--Deny--Ouhabaz criterion 
for parabolic evolution equations governed by forms to the non-autonomous, non-homogeneous and semilinear case.
Let $V, H$ are Hilbert spaces such that $V$ is continuously and densely embedded in $H$ and let $\A(t)\colon V\to V^\prime$ be the operator associated  with a 
bounded $H$-elliptic form $\fra(t,.,.)\colon V\times V \to \C$ for all $t \in [0,T]$.
Suppose $\Conv \subset H$ is closed and convex and $P \colon H \to H$ the orthogonal projection onto $\Conv$.
Given $f \in L^2(0,T;V')$ and $u_0\in \Conv$,
we investigate whenever the solution of the non-autonomous evolutionary problem
\[
u' (t)+\A(t)u(t)=f(t), \quad u(0)=u_0,
\]
remains in $\Conv$
and show that this is the case if
\begin{equation*}
	Pu(t) \in V \quad \text{and} \quad \Re \fra(t,Pu(t),u(t)-Pu(t)) \ge \Re \langle f(t), u(t)-Pu(t) \rangle
\end{equation*}
for a.e.\ $t \in [0,T]$.
Moreover, we examine necessity of this condition and apply this result to a semilinear problem.
\end{abstract}

\bigskip
\noindent  
\textbf{Key words:} Sesquilinear forms, non-autonomous evolution equations, invariance of closed convex sets.\medskip

\noindent
\textbf{MSC:} 35K90, 35K58. %, 31D05.

%%%%%%%%%%%%%%%%%%%%%%%%%%%%%%%%%%%%%%%%%%%%%%%%%%%%%%%%%%%%%%%%%%%%%%%%%%%%%%%%%%%%%%%%%%%%%%%%%%%
\section{Introduction}

The aim of the present article is to generalize the Beurling--Deny--Ouhabaz criterion 
for parabolic evolution equations to the non-autonomous, non-homogeneous and semilinear case.
We first consider an non-homogeneous Cauchy problem of the form
\begin{equation}\label{eq:introprob}
	u'(t)+ \A(t) u(t) = f(t) \text{ for a.e. } t \in (0,T), \quad u(0)=u_0.
\end{equation}
on the Hilbert space $V'$ where $V \overset d \hookrightarrow H$, $V'$ is the antidual of $V$
and the operators $\A(t)$ are associated with a bounded, quasi-coercive non-autonomous form 
$\fra \colon [0,T] \times V \times V \to \K$  (see Section~\ref{sec:prelim} for precise definitions).
If $f \in L^2(0,T;V')$ and  $u_0 \in H$, then by a classical theorem of Lions, there exists a unique solution
$u \in \MRs([0,T]) := H^1(0,T;V') \cap L^2(0,T;V)$.
Note that $V \overset d \hookrightarrow H$ and so $H \overset d \hookrightarrow V'$.
The maximal regularity space $\MRs([0,T])$ can be identified with a subspace of $C([0,T];H)$.
Thus the initial condition $u(0)=u_0$ is meaningful.

Let $\Conv \subset H$ be closed and convex and let $P \colon H \to H$ be the orthogonal projection onto $\Conv$.
Now if $\fra$ is \emph{autonomous}, i.e. $\fra(\cdot,v,w)$ is constant for every $v,w \in V$, and $f=0$, then the 
Beurling--Deny--Ouhabaz criterion states the following.
\begin{theorem}[{\cite[Theorem~2.1]{MVV05}, \cite[Theorem~2.2]{Ouh05}}]\label{thm:BDO}
	Every $u \in \MRs([0,T])$ with $u' +\A u=0$, $u(0) \in \Conv$ satisfies $u(t) \in \Conv$ for every $t \in [0,T]$
	if and only if $PV \subset V$ and $\Re \fra(Pv,v-Pv) \ge 0$ for every $v \in V$.
\end{theorem}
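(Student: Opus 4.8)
The plan is to prove the two implications separately. The \emph{sufficiency} of the form condition will follow from a differential inequality for the squared distance of $u(t)$ to $\Conv$, combined with Gronwall's lemma; this is the quantitative heart of the statement and is entirely constructive. For \emph{necessity} I will use the $C_0$-semigroup $(T(t))_{t\ge0}$ generated by $-\A$, so that the unique $\MRs$-solution with $u(0)=u_0$ is $u(t)=T(t)u_0$; invariance of $\Conv$ is then the statement $T(t)\Conv\subset\Conv$ for all $t$, from which I will recover both the regularity $PV\subset V$ and the inequality.

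\emph{Sufficiency.} Assume $PV\subset V$ and $\Re\fra(Pv,v-Pv)\ge0$ for all $v\in V$, and let $u\in\MRs([0,T])$ solve $u'+\A u=0$ with $u(0)\in\Conv$. Set $\phi(t):=\tfrac12\operatorname{dist}(u(t),\Conv)^2=\tfrac12\norm{u(t)-Pu(t)}_H^2$. The function $x\mapsto\tfrac12\operatorname{dist}(x,\Conv)^2$ is convex and continuously differentiable on $H$ with gradient $x\mapsto x-Px$; since $PV\subset V$ and $u(t)\in V$ for a.e.\ $t$, we have $(I-P)u(t)\in V$ a.e., so the chain rule for the maximal regularity space gives $\phi'(t)=\Re\langle u'(t),u(t)-Pu(t)\rangle$ for a.e.\ $t$. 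Inserting $u'(t)=-\A u(t)$ and $\langle\A u(t),w\rangle=\fra(u(t),w)$, and splitting $\fra(u(t),u(t)-Pu(t))=\fra(Pu(t),u(t)-Pu(t))+\fra(u(t)-Pu(t),u(t)-Pu(t))$, I obtain
\[
\phi'(t)=-\Re\fra(Pu(t),u(t)-Pu(t))-\Re\fra(u(t)-Pu(t),u(t)-Pu(t)).
\]
The first term is $\le0$ by hypothesis (take $v=u(t)$), while quasi-coercivity $\Re\fra(w,w)\ge\alpha\norm{w}_V^2-\omega\norm{w}_H^2$ bounds the second by $\omega\norm{u(t)-Pu(t)}_H^2=2\omega\,\phi(t)$. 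Hence $\phi'(t)\le2\omega\,\phi(t)$ a.e.; since $\phi(0)=0$ (as $u(0)\in\Conv$) and $\phi\ge0$, Gronwall forces $\phi\equiv0$, i.e.\ $u(t)\in\Conv$ for all $t$.

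\emph{Necessity.} Assume $\Conv$ is invariant. The inequality is obtained by a tangency argument, granting $PV\subset V$: fix $v\in V$, so that $Pv\in V$, and let $u(t)=T(t)Pv$, the $\MRs$-solution with $u(0)=Pv\in\Conv$. By invariance $u(t)\in\Conv$, so the minimizing property of $Pv$ gives $\norm{u(t)-v}_H\ge\operatorname{dist}(v,\Conv)=\norm{Pv-v}_H=\norm{u(0)-v}_H$ for all $t$. Thus $t\mapsto\tfrac12\norm{u(t)-v}_H^2$ attains its minimum at $t=0$, so its right derivative there is $\ge0$. Computing this derivative with the same chain rule yields
\[
0\le\lim_{t\downarrow0}\frac1t\int_0^t\Re\langle u'(s),u(s)-v\rangle\,ds=-\Re\fra(Pv,Pv-v)=\Re\fra(Pv,v-Pv),
\]
where the passage to the limit uses $u(s)\to Pv$ in $V$ as $s\downarrow0$ (strong continuity of the semigroup in the form topology on initial data in $V$; the quadratic term, if needed, is handled by weak lower semicontinuity of $w\mapsto\Re\fra(w,w)+\omega\norm{w}_H^2$).

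\emph{Main obstacle.} The crux is the regularity statement $PV\subset V$, since $P$ is a priori only an operation on $H$ and the $V$-norm of $Pv$ is not directly controlled. The intended route is to pass to the resolvent: invariance of $\Conv$ under $(T(t))$ yields $\mu R_\mu\Conv\subset\Conv$ for large $\mu$ (via $\mu R_\mu x=\int_0^\infty\mu e^{-\mu s}T(s)x\,ds$, a closed-convex average of points of $\Conv$), and one then seeks a uniform $V$-bound for suitable resolvent approximants of $Pv$ from quasi-coercivity, so that $Pv$ appears as a weak $V$-limit and hence lies in $V$. Making this bound genuinely uniform — equivalently, recognizing $Pv$ as an element of the form domain through the semigroup characterization of $V$ — is the delicate point on which this direction hinges; the remaining ingredient, the chain rule for $t\mapsto\Phi(u(t))$ with $\Phi$ convex, $C^1$ and $\nabla\Phi(u(t))\in V$, is the natural extension of Lions' identity $\tfrac12\tfrac{d}{dt}\norm{u}_H^2=\Re\langle u',u\rangle$ and can be taken from the preliminaries.
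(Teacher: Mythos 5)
Your skeleton (Gronwall for sufficiency, a tangency argument for necessity) matches the route by which the paper itself recovers this classical criterion — the statement is cited from \cite{MVV05}, \cite{Ouh05}, and re-derived there from Theorem~\ref{thm:inv} together with Corollary~\ref{cor:pointwiseest} — but both halves of your argument lean precisely on the steps that constitute the real work, and neither is supplied. In the sufficiency half, the identity $\phi'(t)=\Re\langle u'(t),u(t)-Pu(t)\rangle$ is not a standard chain rule that ``can be taken from the preliminaries'': for $u\in\MRs([0,T])$ the derivative $u'$ lives only in $L^2(0,T;V')$, the standard chain rule for $C^1$ convex functionals requires $u'\in L^2(0,T;H)$, and $t\mapsto Pu(t)$ has no differentiability of its own, since $P$ is merely a contraction of $H$. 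This identity is exactly the paper's Lemma~\ref{lem:invariance}, which is singled out as the crucial new ingredient: its proof compares the two difference quotients $\theta_h$ and $\eta_h$, squeezes them against each other using the minimizing property of $Pu(t)$ and $Pu(t+h)$, and then recovers the identity from a one-sided distributional inequality applied to $\pm\varphi$. Asserting this as known leaves the key lemma unproved. (You would also need $Pu\in L^2(I;V)$, i.e.\ $V$-valued measurability and square integrability, cf.\ Lemma~\ref{lem:invbound}; under your standing hypothesis $PV\subset V$ this is the easier point, but it should be said.)

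The necessity half has the more serious gap, which you yourself flag: $PV\subset V$ is never proved. Your tangency computation opens with ``fix $v\in V$, so that $Pv\in V$,'' which assumes the very regularity assertion in question, and your closing paragraph only sketches a resolvent strategy whose decisive uniform $V$-bound is left open; for non-symmetric forms the ``semigroup characterization of $V$'' that would close it is a statement of Kato-square-root type and cannot be invoked for free. The paper closes this gap by a different mechanism. Theorem~\ref{thm:necInv} first proves the inequality a.e.\ along an \emph{arbitrary} solution, by restarting the equation at $Pu(t_{k-1}^n)$ on a fine grid, exploiting invariance of each restarted solution, and passing to the limit in an energy estimate; Corollary~\ref{cor:pointwiseest} then obtains $Pv\in V$ from the quantitative bound of Lemma~\ref{lem:invbound} — which controls $\norm{v-Pv}_V$ as soon as the form inequality holds at $v$ — combined with weak compactness along $t_n\downarrow c$ and a density argument in $\tilde V=D(A)$ (the admissible dense subspace in the autonomous case). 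In other words, membership of $Pv$ in $V$ is extracted from the inequality itself rather than from semigroup theory; this is the idea your proposal is missing, and without it (or a completed version of your resolvent sketch) the necessity direction is not a proof.
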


A first non-autonomous version of the result stated above was given in \cite{ADO14}:
Let $f \in L^2(0,T;V')$. Suppose that 
\begin{equation}\label{eq:introinv}
	PV \subset V \quad \text{and}\quad \Re \fra(t,Pv,v-Pv) \ge \Re\langle f(t), v-Pv \rangle \quad  (v \in V), 
\end{equation}
for a.e.\ $t\in[0,T]$,
then every $u \in \MRs([0,T])$ with $u'+\A u =f$, $u(0) \in \Conv$ satisfies $u(t) \in \Conv$ for every $t \in [0,T]$.

This result is very satisfying for the nonhomogeneous equation,
but is not suitable for semilinear equations (see below). For this reason we establish in Section~\ref{sec:invariance} a different criterion.
We show that $u \in \MRs([0,T])$ with $u'+\A u =f$, $u(0) \in \Conv$ satisfies $u(t) \in \Conv$ for every $t \in [0,T]$ provided that
\begin{equation*}
	Pu(t) \in V \quad \text{and} \quad \Re \fra(t,Pu(t),u(t)-Pu(t)) \ge \Re\langle f(t), u(t)-Pu(t) \rangle
\end{equation*}
for a.e.\ $t \in [0,T]$.
Thus we do not have to test for all $v\in V$ but just by $u(t)$ for a.e. $t \in [0,T]$.
This weaker condition makes a big difference when semilinear problems are considered (see Corollary~\ref{cor:invsemi}).
The crucial point in the proof is a version of the fundamental theorem of calculus involving the form and the projection under the natural regularity hypotheses (Lemma~\ref{lem:invariance}).
Its proof requires completely new ideas.

So far, in the non-autonomous and even in the autonomous non-homogeneous case (i.e.\ $f \neq 0$), nothing is known on necessity of \eqref{eq:introinv}.
In Section~\ref{sec:nec} we show under mild regularity assumptions that \eqref{eq:introinv} holds if every solution $u \in \MRs([t_0,b])$ of
$u'+\A u=f$, $u(t_0)\in \Conv$, $t_0 \in [a,b)$ has values in $\Conv$ (see Corollary~\ref{cor:pointwiseest}).

Also for the semilinear equation we prove necessity of the condition, however some more refined arguments are needed (see Section~\ref{sec:semilinnec}).

In Section~\ref{section:Example} we give an illustrating example of a semilinear equation with logistic growth.

\section*{Acknowledgment}
The author would like to thank Wolfgang Arendt for
his interest in this project and for many helpful comments.
%%%%%%%%%%%%%%%%%%%%%%%%%%%%%%%%%%%%%%%%%%%%%%%%%%%%%%%%%%%%%%%%%%%%%%%%%%%%%%%%%%%%%%%%%%%%%%%%%%%
\section{Preliminaries}\label{sec:prelim}

Let $\K$ be the field $\R$ or $\C$ and let $V$ and $H$ be Hilbert spaces over the field $\K$ such that $V \overset d \hookrightarrow H$; i.e., $V$ is continuously and densely embedded in $H$. 
Then $H \overset d \hookrightarrow V'$ via $v\mapsto (v \, \vert \, \cdot )_H$, where $V'$ denotes the antidual (or dual if $\K=\R$) of $V$.
Let $I:=[a,b]$, where $-\infty < a < b <\infty$ and let $\fra \colon I \times V \times V \to \K$. 
If $\fra(t, \cdot, \cdot)$ is sesquilinear for all $t \in I$ and $\fra(\cdot,v,w)$ is measurable for all $v,w \in V$, we say $\fra$ is a \emph{non-autonomous form}. 
If there exist a constant $M \ge 0$ such that
\begin{align}
	&\abs{\fra(t,v,w)} \le M \norm{v}_V \norm{w}_V  &(t \in I,\ v,w \in V), \label{eq:Vbounded}
	\intertext{we say that $\fra$ is \emph{bounded} and if there exist constants $\alpha >0$ and $\omega \in \R$ such that}
	&\operatorname{Re} \fra(t,v,v) + \omega \norm{v}_H^2 \ge \alpha \norm{v}_V^2 &(t \in I,\ v \in V). \label{eq:qcoercive}
\end{align}
we say that $\fra$ is \emph{$H$-elliptic} and \emph{coercive} in the case where $\omega =0$.
If $\fra$ is a bounded $H$-elliptic non-autonomous form we write $\fra \in \Form(I;V,H)$.

Let $\fra \in \Form(I;V,H)$.
For $t \in I$ we define $\A(t) \in \mathcal{L}(V,V')$ by $v \mapsto \fra(t,v, \cdot)$.
Then there exists an operator $\tilde \A \in \mathcal{L}(L^2(I;V); L^2(I;V'))$ such that $\tilde \A u(t) = \A(t) u(t)$ for a.e.\ $t \in I$.
By an abuse of notation we use the same letter for $\A$ and $\tilde \A$ and
we say that $\A$ is the \emph{operator associated with} $\fra$ and write $\A \sim \fra$.
In the separable case this is quite obvious,
but it needs special attention in the non-separable case (see \cite[Proposition~4.1]{DZ16}).

We define the \emph{maximal regularity space} $\MRs(I;V,H) := H^1(I;V') \cap L^2(I; V)$, with norm $\norm{u}_{\MRs(I;V,H)}^2 := \norm{u'}_{L^2(I;V')}^2+ \norm{u}_{L^2(I;V)}^2$.
Note that $\MRs(I;V,H)\hookrightarrow C(I;H)$, thus we consider $\MRs(I;V,H)$ as a subspace of $C(I;H)$.
Moreover, $\MRs(I;V,H)$ is a Hilbert space for the norm $\norm{\cdot}_{\MRs(I;V,H)}$. If no confusion occurs, we write $\MRs(I)$ instead of $\MRs(I;V,H)$.
A famous result due to J.~L.~Lions (see \cite[p.\ 513]{DL88}, \cite[p.~106]{Sho97}) 
establishes existence and uniqueness of the Cauchy problem associated with $\A$.
\begin{theorem}\label{thm:Lions}
Let $\fra \in \Form(I;V,H)$. 
Then for every $u_a\in H$ and $f \in L^2(I;V')$ there exists a unique
$u \in \MRs(I)$ such that
\begin{equation}\label{eq:CP}
	u'+ \A u = f, \quad u(a) = u_a
\end{equation}
Moreover, there exists a constant $c_\fra$ depending only on $M$, $\alpha$ and $\omega$ such that
\begin{equation}\label{eq:mrbound}
	\norm{u}^2_{\MRs(I)} \le c_\fra \left[ \norm{f}_{L^2(I;V')}^2 + \norm{u_a}_H^2 \right].
\end{equation}
\end{theorem}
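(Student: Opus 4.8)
The plan is to reduce to the coercive case and then construct the solution via Lions' representation theorem, a non-symmetric variant of Lax--Milgram in which the trial and test spaces are allowed to differ. First I would remove the constant $\omega$: setting $v(t):=e^{-\omega t}u(t)$, the function $u$ solves \eqref{eq:CP} if and only if $v$ solves $v'+(\A+\omega)v=e^{-\omega\cdot}f$ with $v(a)=e^{-\omega a}u_a$, and the form $\fra+\omega(\cdot\mid\cdot)_H$ associated with $\A+\omega$ is coercive by \eqref{eq:qcoercive}. Since $I$ is bounded, the map $u\mapsto v$ is an isomorphism of $\MRs(I)$ preserving all relevant estimates up to constants depending only on $\omega$ and $b-a$, so we may assume $\omega=0$, i.e.\ $\Re\fra(t,v,v)\ge\alpha\norm{v}_V^2$.

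Next I would set up the variational formulation on the pre-Hilbert test space $\Phi:=\{\phi\in H^1(I;V):\phi(b)=0\}$, normed by $\norm{\phi}_\Phi^2:=\norm{\phi}_{L^2(I;V)}^2+\norm{\phi(a)}_H^2$, with ambient Hilbert space $\mathcal H:=L^2(I;V)$. Shifting the time derivative onto the test function via the product rule $\frac{d}{dt}(u\mid\phi)_H=\langle u',\phi\rangle+\overline{\langle\phi',u\rangle}$ on $\MRs(I)$ and using $\phi(b)=0$, problem \eqref{eq:CP} becomes: find $u\in\mathcal H$ with
\[
E(u,\phi):=\int_a^b\bigl[\fra(t,u(t),\phi(t))-(u(t)\mid\phi'(t))_H\bigr]\,dt=\int_a^b\langle f(t),\phi(t)\rangle\,dt+(u_a\mid\phi(a))_H=:L(\phi)
\]
for all $\phi\in\Phi$. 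I would then verify the three hypotheses: for fixed $\phi$ the map $u\mapsto E(u,\phi)$ is $\norm{\cdot}_{\mathcal H}$-continuous (by \eqref{eq:Vbounded} and $V\hookrightarrow H$); the functional $L$ is $\norm{\cdot}_\Phi$-continuous, the boundary term being controlled exactly by the $\norm{\phi(a)}_H$ summand of $\norm{\cdot}_\Phi$; and the coercivity $\Re E(\phi,\phi)\ge\beta\norm{\phi}_\Phi^2$ holds, since $-\Re\int_a^b(\phi\mid\phi')_H\,dt=\tfrac12\norm{\phi(a)}_H^2$ while $\Re\fra(t,\phi,\phi)\ge\alpha\norm{\phi}_V^2$, giving $\beta=\min(\alpha,\tfrac12)$. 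Lions' representation theorem then yields $u\in L^2(I;V)$ with $E(u,\phi)=L(\phi)$ on $\Phi$.

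It remains to upgrade this weak solution. Testing with $\phi\in C_c^\infty((a,b);V)\subset\Phi$ shows $u'=f-\A u$ in $\mathcal D'((a,b);V')$; since the right-hand side lies in $L^2(I;V')$ we obtain $u\in H^1(I;V')$, hence $u\in\MRs(I)\hookrightarrow C(I;H)$. Integrating by parts once more against a general $\phi\in\Phi$ and comparing with the boundary term in $L$ identifies $u(a)=u_a$. Uniqueness and the estimate \eqref{eq:mrbound} both follow from the energy identity $\frac{d}{dt}\norm{u(t)}_H^2=2\Re\langle f(t),u(t)\rangle-2\Re\fra(t,u(t),u(t))$, valid on $\MRs(I)$: for $f=u_a=0$ coercivity forces $u\equiv0$, while in general integrating and using coercivity together with Young's inequality bounds $\sup_t\norm{u(t)}_H^2$ and $\norm{u}_{L^2(I;V)}^2$, after which $\norm{u'}_{L^2(I;V')}\le\norm{f}_{L^2(I;V')}+M\norm{u}_{L^2(I;V)}$ closes the estimate, the $\omega$-dependence of $c_\fra$ entering through the rescaling.

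The main obstacle is the variational set-up rather than any single computation: because the time derivative cannot be tested symmetrically, trial and test spaces must differ, and the boundary functional $\phi\mapsto(u_a\mid\phi(a))_H$ is not continuous for the $L^2(I;V)$-norm. The delicate point is therefore to equip $\Phi$ with a norm fine enough to make $L$ continuous yet still dominated by $\Re E(\phi,\phi)$ --- which is exactly what forces the $\norm{\phi(a)}_H$ term and is why an ordinary Lax--Milgram argument on one Hilbert space does not suffice. A secondary technical hurdle is justifying the integration-by-parts and product-rule formulas on $\MRs(I)$ in the case $\K=\C$, which underlie both the recovery of the initial condition and the energy estimates.
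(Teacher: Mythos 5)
Your proposal is correct, and it is essentially the proof the paper points to rather than a different one: the paper does not prove Theorem~\ref{thm:Lions} itself but cites it as Lions' classical theorem (\cite{DL88}, \cite{Sho97}), and those references argue exactly as you do --- reduction to the coercive case, Lions' representation theorem with trial space $L^2(I;V)$ and a test space of $H^1(I;V)$ functions vanishing at the terminal time, recovery of $u\in\MRs(I)$ and of the initial condition by integration by parts, and the energy identity for uniqueness and the a priori bound. One minor point in your favor: after the exponential rescaling the constant $c_\fra$ also picks up a dependence on the interval length $b-a$ (unavoidable when $\omega>0$), so the paper's claim that $c_\fra$ depends only on $M$, $\alpha$ and $\omega$ must be read for a fixed interval, and your explicit tracking of the $\omega$- and $(b-a)$-dependence is the more precise formulation.
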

Note that $u'$, $\A u$ and $f$ are in $L^2(I;V')$ so we consider the equality $u'+\A u = f$ in the space $L^2(I;V')$.
Thus we have \emph{maximal regularity} in the space $L^2(I;V')$.
In the following we call the function $u \in \MRs(I)$ the \emph{solution} of \eqref{eq:CP}.

\begin{lemma}[{\cite[p.~106]{Sho97}}]\label{lem:productrule}
	Let $u \in \MRs(I)$. Then $\norm{u}_H^2 \in W^{1,1}(I)$ with $(\norm{u}_H^2)' = 2 \Re \langle u' , u \rangle$.
\end{lemma}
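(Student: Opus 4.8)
The plan is to reduce the claim to the integral identity
\begin{equation*}
	\norm{u(t)}_H^2 - \norm{u(s)}_H^2 = \int_s^t 2\Re\langle u'(r), u(r)\rangle\,dr \qquad (s,t \in I),
\end{equation*}
from which the assertion follows by the characterization of $W^{1,1}(I)$ as the functions obeying the fundamental theorem of calculus, once we note that $r \mapsto \Re\langle u'(r), u(r)\rangle$ lies in $L^1(I)$. Indeed, by Cauchy--Schwarz $\abs{\langle u'(r), u(r)\rangle} \le \norm{u'(r)}_{V'}\norm{u(r)}_V$, and the right-hand side is integrable since $u' \in L^2(I;V')$ and $u \in L^2(I;V)$ by the definition of $\MRs(I)$.

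First I would record the algebraic fact underlying the whole computation: since $H \overset d \hookrightarrow V'$ via $v \mapsto (v \mid \cdot)_H$, the $V'$--$V$ duality restricts to the scalar product of $H$, i.e. $\langle w, v\rangle = (w \mid v)_H$ whenever $w \in H$ and $v \in V$. This compatibility is what allows the derivative of $\norm{u}_H^2$ to be expressed through the pairing even though $u'$ a priori only takes values in $V'$.

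Next I would establish the identity for a dense, regular class of functions. For $u \in C^1(I;V)$ the map $t \mapsto \norm{u(t)}_H^2 = (u(t)\mid u(t))_H$ is classically $C^1$ with derivative $2\Re(u'(t)\mid u(t))_H$; since then $u'(t) \in V \subset H$, the algebraic fact turns this into $2\Re\langle u'(t), u(t)\rangle$, and integrating yields the displayed identity. To reach a general $u \in \MRs(I)$ I would approximate it in the norm of $\MRs(I)$ by functions $u_n \in C^1(I;V)$. Then $u_n' \to u'$ in $L^2(I;V')$ and $u_n \to u$ in $L^2(I;V)$, so $\langle u_n', u_n\rangle \to \langle u', u\rangle$ in $L^1(I)$ by Cauchy--Schwarz, while $u_n \to u$ in $C(I;H)$ because $\MRs(I) \hookrightarrow C(I;H)$; passing to the limit in the identity for each $u_n$ gives it for $u$.

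The main obstacle is this approximation step: one must ensure that the regularized functions genuinely take values in $V$ (not merely in $V'$) and remain $C^1$ in time up to the endpoints of the compact interval $I$. This requires a careful extension of $u$ beyond $I$ within $\MRs$ before mollifying in time, together with the density of $C^1(I;V)$ in $\MRs(I)$. Everything else reduces to the Gelfand-triple compatibility and routine limit-passing under Cauchy--Schwarz.
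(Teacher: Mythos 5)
The paper does not actually prove this lemma at all---it is quoted directly from Showalter \cite[p.~106]{Sho97}---and your argument is precisely the classical proof given in that reference: verify the identity for the dense class $C^1(I;V)$, where it reduces to the chain rule together with the Gelfand-triple compatibility $\langle w,v\rangle = (w \mid v)_H$, then pass to the limit in $\MRs(I)$ via Cauchy--Schwarz and the embedding $\MRs(I)\hookrightarrow C(I;H)$ recorded in the paper's preliminaries. Your proof is correct; the single ingredient you leave as a black box, the density of $C^1(I;V)$ in $\MRs(I)$ obtained by extension and mollification in time, is exactly the standard fact on which the cited proof rests, and using the embedding into $C(I;H)$ is legitimate here (not circular) since the paper states it independently before the lemma.
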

As a consequence we obtain that
\[
	\norm{u(t)}_H^2-\norm{u(s)}_H^2 = 2 \int_s^t \Re \langle u' , u \rangle \ \mathrm{d}{t} \quad (s,t \in I),
\]
since $\MRs(I) \hookrightarrow C(I;H)$ and $u \mapsto \norm{u}_H^2$ is continuous.

%%%%%%%%%%%%%%%%%%%%%%%%%%%%%%%%%%%%%%%%%%%%%%%%%%%%%%%%%%%%%%%%%%%%%%%%%%%%%%%%%%
\section{Invariance of closed convex sets}\label{sec:invariance}

Let $I:=[a,b]$ where $-\infty < a < b <\infty$ and let $V, H$ be Hilbert spaces over the field $\K$ such that $V \overset d \hookrightarrow H$. 
Suppose $\fra \in \Form(I;V,H)$ and $\A \sim \fra$.
Let $\Conv \subset H$ be a closed convex set and let $P\colon H \to \Conv$ be the orthogonal projection onto $\Conv$;
i.e., for $x\in H$, $Px$ is the unique element in $\Conv$ such that
\begin{equation*}
	\norm{x - Px}_H \le \norm{x - y}_H \quad (y \in \Conv).
\end{equation*}

In this section we study invariance properties of the solution $u$ of \eqref{eq:CP} in terms of the form $\fra$ and the projection $P$. 
Our main result in this section is the following invariance criterion.
The point is that a criterion on an individual solution is given.

\begin{theorem}\label{thm:inv}
    Let $u_a \in \Conv$, $f \in L^2(I;V')$ and $u \in \MRs(I)$ be the solution of $u'+\A u =f$, $u(a) = u_a$.
    Then $u(t) \in \Conv$ for all $t \in I$ if
    \begin{equation}\label{eq:invcrit}
	Pu(t) \in V \quad \text{and}\quad \Re \fra(t, Pu(t), u(t)-Pu(t)) \ge  \Re \langle{f(t)}, {u(t)-Pu(t)}\rangle
    \end{equation}
    for a.e.\ $t \in I$.
\end{theorem}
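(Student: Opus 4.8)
The plan is to use the square of the distance to $\Conv$ as a Lyapunov functional and to derive a differential inequality for it. Set
\[
\phi(t) := \tfrac12 \norm{u(t)-Pu(t)}_H^2 = \tfrac12 \operatorname{dist}(u(t),\Conv)^2 \qquad (t \in I).
\]
Since $\MRs(I)\hookrightarrow C(I;H)$ and $P$ is nonexpansive on $H$, the map $t\mapsto u(t)-Pu(t)$ is continuous into $H$, so $\phi$ is continuous, $\phi\ge 0$, and $\phi(a)=0$ because $u(a)=u_a\in\Conv$ forces $Pu(a)=u(a)$. The goal is to show $\phi\equiv 0$, which is equivalent to $u(t)=Pu(t)\in\Conv$ for every $t\in I$.

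The heart of the argument is a chain rule of the form
\[
\phi(t)-\phi(s)=\int_s^t \Re\langle u'(\tau),\,u(\tau)-Pu(\tau)\rangle \,\mathrm d\tau \qquad (s,t\in I),
\]
the analogue of Lemma~\ref{lem:productrule} with the projection inserted. Granting it, $\phi\in W^{1,1}(I)$ with $\phi'(\tau)=\Re\langle u'(\tau),u(\tau)-Pu(\tau)\rangle$ a.e. I would then substitute the equation $u'=f-\A u$, that is $\langle u'(\tau),\xi\rangle=\langle f(\tau),\xi\rangle-\fra(\tau,u(\tau),\xi)$, and split $u=Pu+(u-Pu)$ in the first argument of $\fra$, to obtain for a.e.\ $\tau$
\[
\phi'(\tau)=\Re\langle f(\tau),u(\tau)-Pu(\tau)\rangle-\Re\fra(\tau,Pu(\tau),u(\tau)-Pu(\tau))-\Re\fra(\tau,u(\tau)-Pu(\tau),u(\tau)-Pu(\tau)).
\]
The first two terms are $\le 0$ by the hypothesis \eqref{eq:invcrit}, and $H$-ellipticity \eqref{eq:qcoercive} gives $-\Re\fra(\tau,w,w)\le -\alpha\norm{w}_V^2+\omega\norm{w}_H^2\le \omega\norm{w}_H^2$ with $w=u(\tau)-Pu(\tau)$. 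Hence $\phi'(\tau)\le 2\omega\,\phi(\tau)$ for a.e.\ $\tau\in I$. Since $\phi\in W^{1,1}(I)$, $\phi\ge 0$, $\phi(a)=0$, Gronwall's lemma (applied to $t\mapsto e^{-2\omega t}\phi(t)$) yields $\phi\equiv 0$, which is the claim.

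The \textbf{main obstacle} is proving the chain rule, since the only regularity assumed on the projection is the pointwise membership $Pu(t)\in V$ for a.e.\ $t$, with no control on $\norm{Pu(t)}_V$ in $t$; in particular it is not even clear that $\tau\mapsto\Re\langle u'(\tau),u(\tau)-Pu(\tau)\rangle$ is integrable. My approach would start from the fact that $g(x)=\tfrac12\operatorname{dist}(x,\Conv)^2$ is convex and differentiable with gradient $\nabla g(x)=x-Px$ (here $I-P$ is nonexpansive on $H$), so the two-sided convexity inequalities give, for a.e.\ $s<t$,
\[
\int_s^t\Re\langle u'(\tau),u(s)-Pu(s)\rangle\,\mathrm d\tau \le \phi(t)-\phi(s)\le \int_s^t\Re\langle u'(\tau),u(t)-Pu(t)\rangle\,\mathrm d\tau,
\]
where the $H$-inner products have been rewritten as $V'$–$V$ pairings of $u(t)-u(s)=\int_s^t u'$ against the \emph{frozen} elements $u(s)-Pu(s)$ and $u(t)-Pu(t)$, which lie in $V$ by hypothesis. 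Passing to the limit via Lebesgue points of $u'\in L^2(I;V')$ would then pinch $\phi'$ to the desired value, provided one can pass the limit through the slot $u(s)-Pu(s)$, which converges to $u(t)-Pu(t)$ only in $H$ and not in $V$. Closing precisely this $H$-versus-$V$ gap under the minimal pointwise regularity is the delicate point, and it is where a genuinely new argument is required; this is the content isolated as Lemma~\ref{lem:invariance}.
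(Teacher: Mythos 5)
Your reduction is exactly the paper's (it is the proof of Theorem~\ref{thm:invstab}): consider the squared distance $\norm{u(t)-Pu(t)}_H^2$, prove a fundamental-theorem-of-calculus identity for it, substitute $u'=f-\A u$, split $u=Pu+(u-Pu)$ in the first slot of $\fra$, use \eqref{eq:invcrit} together with $H$-ellipticity, and finish with Gronwall; even your convexity sandwich is the same device the paper uses (there it appears as $\eta_h \le \norm{u(t+h)-Pu(t+h)}_H^2-\norm{u(t)-Pu(t)}_H^2 \le \theta_h$, a consequence of the best-approximation property of $P$). However, the proposal has a genuine gap: the two steps you yourself flag as delicate are precisely the mathematical content of the theorem, and you prove neither. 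First, the integrability problem is real and must be solved before the chain rule can even be stated: the paper's Lemma~\ref{lem:invbound} extracts it from the hypothesis itself, by testing \eqref{eq:qcoercive} on $w(t)=u(t)-Pu(t)$, splitting $\fra(t,u,w)=\fra(t,Pu,w)+\fra(t,w,w)$, and applying \eqref{eq:invcrit} and Young's inequality to get a.e.
\[
\norm{w(t)}_V^2 \le \tfrac{1}{\alpha^2}\bigl(M^2\norm{u(t)}_V^2+\norm{f(t)}_{V'}^2\bigr)+\tfrac{2\omega}{\alpha}\norm{w(t)}_H^2,
\]
whose right-hand side is in $L^1(I)$; hence $Pu\in L^2(I;V)$. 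You mention this difficulty but give no mechanism to resolve it. Second, the chain rule itself you explicitly concede (``a genuinely new argument is required''), deferring to Lemma~\ref{lem:invariance}; since that lemma is the heart of the whole proof, deferring to it is not a proof.

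Moreover, your proposed route to the chain rule --- pinching $\phi'$ at Lebesgue points --- would not close as stated. In your lower bound the test vector $u(s)-Pu(s)$ moves with $s$ and converges to $u(t)-Pu(t)$ only in $H$, while $u(t)-u(s)=\int_s^t u'$ is controlled only in $V'$, so that side of the sandwich has no limit; and even granting a.e.\ one-sided bounds on Dini derivatives, an a.e.\ differential inequality for a merely continuous $\phi$ cannot be integrated --- one needs $\phi\in W^{1,1}(I)$, which is exactly what has to be proved. The paper avoids pointwise limits altogether: once $Pu\in L^2(I;V)$ is known, it uses strong continuity of translations, namely $u(\cdot+h)\to u$ and $Pu(\cdot+h)\to Pu$ in $L^2((a,b-\delta);V)$ and $\tfrac1h\int_\cdot^{\cdot+h}u'\,\mathrm{d}s\to u'$ in $L^2((a,b-\delta);V')$, so that both sides of the sandwich, $\tfrac1h\theta_h$ and $\tfrac1h\eta_h$, converge in $L^1$ to $2\Re\langle u',u-Pu\rangle$; it then tests the sandwiched difference quotient of $\norm{u-Pu}_H^2$ against $\varphi\in C_c^\infty(a,b)$, splitting the integral according to the sign of $\varphi$, and finally replaces $\varphi$ by $-\varphi$ to upgrade the resulting distributional inequality to the identity $(\norm{u-Pu}_H^2)'=2\Re\langle u',u-Pu\rangle$. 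This translation-in-$L^2$, test-function argument is the missing idea in your proposal.
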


As a corollary we obtain a result obtained in \cite[Theorem 2.2]{ADO14}
which is a criterion for invariance of all solutions.

\begin{corollary}\label{cor:invariance}
	Let $f \in L^2(I;V')$.
	Suppose that $PV \subset V$ and for a.e.\ $t \in I$
	\begin{equation}\label{eq:invariance_criterion}
		\Re \fra(t, Pv, v-Pv) \ge  \Re \langle{f(t)}, {v-Pv}\rangle \quad (v \in V).
	\end{equation}
	Then for every solution $u \in \MRs(I)$ of $u'+\A u =f$ with $u(a) \in \Conv$, we have $u(t) \in \Conv$ for every $t \in I$.
\end{corollary}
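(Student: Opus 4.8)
The plan is to obtain Corollary~\ref{cor:invariance} as a direct specialization of Theorem~\ref{thm:inv}: the hypothesis \eqref{eq:invariance_criterion} is required to hold for \emph{every} $v \in V$, whereas the criterion \eqref{eq:invcrit} of Theorem~\ref{thm:inv} only tests the single, time-dependent element $v = u(t)$. It therefore suffices to feed the solution $u(t)$ into \eqref{eq:invariance_criterion} and to check that the resulting pointwise inequality is exactly \eqref{eq:invcrit}.

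First I would verify the regularity half of \eqref{eq:invcrit}. Since $u \in \MRs(I) \subset L^2(I;V)$, we have $u(t) \in V$ for a.e.\ $t \in I$. As $PV \subset V$ by assumption, it follows that $Pu(t) \in V$ for a.e.\ $t \in I$, which is the first condition in \eqref{eq:invcrit}.

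Next I would derive the inequality in \eqref{eq:invcrit}. Let $N \subset I$ be the null set outside of which \eqref{eq:invariance_criterion} holds (uniformly in $v \in V$), and enlarge $N$ by the null set on which $u(t) \notin V$. For $t \in I \setminus N$ the element $v := u(t)$ lies in $V$, so it may be inserted into \eqref{eq:invariance_criterion}; since then $Pv = Pu(t)$ and $v - Pv = u(t) - Pu(t)$, this gives precisely
\[
	\Re \fra(t, Pu(t), u(t) - Pu(t)) \ge \Re \langle f(t), u(t) - Pu(t) \rangle
\]
for a.e.\ $t \in I$. Together with the previous step, \eqref{eq:invcrit} holds almost everywhere, and Theorem~\ref{thm:inv} yields $u(t) \in \Conv$ for all $t \in I$.

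The only delicate point, and the one I expect to be the main obstacle, is the interchange of quantifiers in the substitution $v \mapsto u(t)$: one must read \eqref{eq:invariance_criterion} as valid for a single null set $N$ common to all $v \in V$, so that the measurable selection $t \mapsto u(t)$ can be plugged in pointwise for a.e.\ $t$. Were the null set permitted to depend on $v$, the substitution would require a separate measurability argument---for instance approximating $u$ in $L^2(I;V)$ by simple (countably valued) functions and invoking boundedness of $\fra$ together with continuity of $P$ to pass to the limit---but under the natural uniform reading of \eqref{eq:invariance_criterion} this subtlety does not arise.
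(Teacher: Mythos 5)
Your proposal is correct and is precisely the argument the paper intends: the paper states Corollary~\ref{cor:invariance} as an immediate consequence of Theorem~\ref{thm:inv} (with no written proof), obtained by substituting $v = u(t)$ into \eqref{eq:invariance_criterion} for a.e.\ $t$, exactly as you do. Your remark on reading the null set as uniform in $v$ is the correct interpretation of the statement and handles the only subtlety.
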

The advantage of Theorem~\ref{thm:inv} in contrast to Corollary~\ref{cor:invariance} is that we have to test merely by the solution itself. This becomes particularly important if we consider semilinear problems as the following criterion shows.
\begin{corollary}\label{cor:invsemi}
	Let $F \colon I \times \Conv \to V'$ be a function. Assume that $PV \subset V$ and 
	\begin{equation*}
        	\Re \fra(t, Pv, v-Pv) \ge  \Re \langle{F(t, Pv)}, {v-Pv}\rangle \quad (v \in V),
	\end{equation*}
	for a.e.\ $t \in I$.
	Let $u \in \MRs(I)$ such that $u'(t) +\A u(t) =F(t, Pu(t))$ for a.e.\ $t \in I$. If $u(a) \in \Conv$, then $u(t) \in \Conv$ for every $t \in I$.
	Consequently it is a solution of $u'(t)+\A u(t) = F(t, u)$, $t$-a.e.
\end{corollary}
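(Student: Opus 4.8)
The plan is to prove Corollary~\ref{cor:invsemi} by reducing it to Theorem~\ref{thm:inv}. The key observation is that the hypothesis of Corollary~\ref{cor:invsemi} is a ``for all $v \in V$'' condition driven by the nonlinearity evaluated at $Pv$, whereas Theorem~\ref{thm:inv} only requires the inequality tested at the solution itself. So the first step is to freeze the nonlinearity along the solution: define $f(t) := F(t, Pu(t))$ and verify that $f \in L^2(I;V')$, so that $u$ is the unique solution of the linear problem $u' + \A u = f$, $u(a) = u_a$ with $u_a := u(a) \in \Conv$.

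Next I would check the pointwise invariance condition \eqref{eq:invcrit} of Theorem~\ref{thm:inv}. Since $PV \subset V$ by assumption, we have $Pu(t) \in V$ for every $t$, which is the first half of \eqref{eq:invcrit}. For the second half, apply the hypothesis of the corollary with the specific choice $v = u(t)$: this gives
\begin{equation*}
	\Re \fra(t, Pu(t), u(t)-Pu(t)) \ge \Re \langle F(t, Pu(t)), u(t)-Pu(t) \rangle = \Re \langle f(t), u(t)-Pu(t) \rangle
\end{equation*}
for a.e.\ $t \in I$, which is precisely \eqref{eq:invcrit} with this $f$. Hence Theorem~\ref{thm:inv} yields $u(t) \in \Conv$ for every $t \in I$, which is the main claim.

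For the final sentence of the statement, once we know $u(t) \in \Conv$ for every $t$, the projection acts as the identity on the trajectory, i.e.\ $Pu(t) = u(t)$ for all $t \in I$. Substituting this into the defining equation $u'(t) + \A u(t) = F(t, Pu(t))$ gives $u'(t) + \A u(t) = F(t, u(t))$ for a.e.\ $t$, so $u$ is genuinely a solution of the semilinear problem $u' + \A u = F(t,u)$, as asserted.

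I expect the only real subtlety to be the measurability and integrability of $t \mapsto F(t, Pu(t))$, which is needed both to invoke Lions' theorem (Theorem~\ref{thm:Lions}) for uniqueness and to make sense of $f \in L^2(I;V')$; this is implicitly guaranteed by the hypothesis $u'(t) + \A u(t) = F(t,Pu(t))$ holding a.e., since the left-hand side lies in $L^2(I;V')$ by $u \in \MRs(I)$, forcing the right-hand side to do so as well. The heavy lifting is entirely in Theorem~\ref{thm:inv}, so this corollary is essentially a clean specialization: the whole point is that testing only at $v = u(t)$ suffices, which is exactly what the ``individual solution'' formulation of Theorem~\ref{thm:inv} provides.
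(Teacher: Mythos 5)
Your proposal is correct and follows essentially the same route as the paper: the paper's proof also freezes the nonlinearity along the solution (setting $f = u' + \A u$, which equals $F(\cdot,Pu(\cdot))$ a.e.\ and hence lies in $L^2(I;V')$), observes that \eqref{eq:invcrit} holds by testing the hypothesis at $v = u(t)$, and concludes via Theorem~\ref{thm:inv}. Your handling of the integrability of $t \mapsto F(t,Pu(t))$ and of the final identification $Pu = u$ matches the paper's (largely implicit) treatment, so there is nothing to add.
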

\begin{proof}
	Let $f = u'+ \A u$. Then $f \in L^2(I;V')$ and \eqref{eq:invcrit} is satisfied for a.e.\ $t \in I$
	(but possibly not \eqref{eq:invariance_criterion}).
	The claim follows from Theorem~\ref{thm:inv}.
\end{proof}
As indicated in the proof, Corollary~\ref{cor:invsemi} cannot be deduced from Corollary~\ref{cor:invariance}.
In the remainder of this section we prove the following theorem,
which contains the assertion of Theorem~\ref{thm:inv}.
\begin{theorem}\label{thm:invstab}
    Let $u_a \in H$, $f \in L^2(I;V')$ and $u \in \MRs(I)$ be the solution of $u'+\A u =f$, $u(a) = u_a$.
    Suppose that
    \begin{equation}\label{eq:invcrit2}
        Pu(t) \in V \quad\text{and} \quad \Re \fra(t, Pu(t), u(t)-Pu(t)) \ge  \Re \langle{f(t)}, {u(t)-Pu(t)}\rangle
    \end{equation}
 	for a.e.\ $t \in I$.
   Then
	\[
		\norm{u(t)-Pu(t)}_H \le \norm{u_a-P u_a}_H e^{\omega (t-a)} \quad (t \in I),
	\]
	where $\omega \in \R$ satisfies $\Re \fra(t,v,v) \ge -\omega \norm{v}_H^2$ for every $t\in I$ and $v \in V$.
\end{theorem}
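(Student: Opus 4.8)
Write $\phi(t):=\norm{u(t)-Pu(t)}_H^2=\operatorname{dist}(u(t),\Conv)^2$, so that $\phi\in C(I)$, $\phi(a)=\norm{u_a-Pu_a}_H^2$, and the assertion is equivalent to $\phi(t)\le\phi(a)\,e^{2\omega(t-a)}$. The plan is to reduce everything to the chain-rule identity that $\phi\in W^{1,1}(I)$ with
\[
	\phi'(t)=2\Re\langle u'(t),u(t)-Pu(t)\rangle \quad\text{for a.e. } t\in I,
\]
and then to feed the differential equation into this identity and close with Gronwall. Establishing this identity (this is Lemma~\ref{lem:invariance}) is the substance of the argument; everything after it is a short computation.

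Granting the identity, I would finish as follows. Since $u$ solves the equation, $u'(t)=f(t)-\A(t)u(t)$ and $\langle\A(t)u(t),w\rangle=\fra(t,u(t),w)$ for $w\in V$; note $u(t)-Pu(t)\in V$ for a.e.\ $t$ because $u(t)\in V$ a.e.\ and $Pu(t)\in V$ by \eqref{eq:invcrit2}. Splitting the first argument of the form as $u(t)=Pu(t)+(u(t)-Pu(t))$ gives, for a.e.\ $t$,
\[
	\tfrac12\phi'(t)=\bigl[\Re\langle f(t),u(t)-Pu(t)\rangle-\Re\fra(t,Pu(t),u(t)-Pu(t))\bigr]-\Re\fra\bigl(t,u(t)-Pu(t),u(t)-Pu(t)\bigr).
\]
The bracketed term is $\le 0$ by hypothesis \eqref{eq:invcrit2}, and the last term is $\le\omega\norm{u(t)-Pu(t)}_H^2=\omega\phi(t)$ by the choice of $\omega$. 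Hence $\phi'\le 2\omega\phi$ a.e., so $(e^{-2\omega t}\phi(t))'\le 0$ a.e.; as $e^{-2\omega\,\cdot}\phi$ is absolutely continuous it is nonincreasing, which is exactly the claimed estimate after taking square roots.

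It remains to prove the identity, which is where the difficulty lies. My starting point are two sandwich estimates for $s\le t$. First, the minimizing property of $Pu(s),Pu(t)\in\Conv$ yields $\norm{u(t)-Pu(t)}_H\le\norm{u(t)-Pu(s)}_H$ and $\norm{u(s)-Pu(s)}_H\le\norm{u(s)-Pu(t)}_H$; expanding the squared norms and applying Lemma~\ref{lem:productrule} (the constant vectors $Pu(s),Pu(t)\in V$ may be paired with $u'$) gives
\[
	2\int_s^t\Re\langle u'(r),u(r)-Pu(t)\rangle\,dr\le\phi(t)-\phi(s)\le 2\int_s^t\Re\langle u'(r),u(r)-Pu(s)\rangle\,dr.
\]
Second, since $x\mapsto\tfrac12\operatorname{dist}(x,\Conv)^2$ is convex and Fréchet differentiable on $H$ with gradient $x-Px$, the subgradient inequalities at $u(s)$ and at $u(t)$ give
\[
	2\Re\bigl(u(s)-Pu(s)\mid u(t)-u(s)\bigr)_H\le\phi(t)-\phi(s)\le 2\Re\bigl(u(t)-Pu(t)\mid u(t)-u(s)\bigr)_H,
\]
which, upon rewriting the $H$-inner products as $V'$--$V$ pairings of $u(t)-u(s)=\int_s^t u'$, is a second sandwich whose bounds have the projection frozen at $s$ (lower) and at $t$ (upper).

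The reason both sandwiches are needed is that, at the endpoint where we differentiate, the projection appearing in the bound must be frozen there, since otherwise we would have to pass to a limit in a term $\langle u'(r),Pu(\tau)\rangle$ with $\tau$ varying, which is illegitimate because $\tau\mapsto Pu(\tau)$ is continuous only into $H$, not into $V$. Concretely, dividing by $t-s$ and letting $t\downarrow s$ respectively $s\uparrow t$ at Lebesgue points, the upper bound of the first sandwich and the lower bound of the second pin the right derivative, while the lower bound of the first and the upper bound of the second pin the left derivative; both one-sided derivatives equal $2\Re\langle u'(s),u(s)-Pu(s)\rangle$ for a.e.\ $s$. The main obstacle is then to upgrade this a.e.\ two-sided differentiability of the merely continuous $\phi$ to genuine membership in $W^{1,1}(I)$, ruling out a singular part; an a.e.\ derivative bound alone would not yield the Gronwall conclusion. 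Here I would exploit that $I-P$ is nonexpansive on $H$: summing the frozen-endpoint inequalities over a partition, the gap between the resulting upper and lower Riemann--Stieltjes sums is dominated by $\sum_k\norm{u(r_{k+1})-u(r_k)}_H^2$, and the remaining, genuinely delicate point is to force this quadratic-variation-type quantity to vanish as the mesh tends to $0$ using the finer time regularity of $\MRs(I)$ functions, thereby identifying $\phi(t)-\phi(a)$ with $\int_a^t 2\Re\langle u',u-Pu\rangle$. This absolute-continuity step is, I expect, the hardest and most novel part of the proof.
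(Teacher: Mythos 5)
Your overall strategy coincides with the paper's: reduce everything to the statement that $\phi:=\norm{u-Pu}_H^2$ lies in $W^{1,1}(I)$ with $\phi'=2\Re\langle u',u-Pu\rangle$ (this is Lemma~\ref{lem:invariance}), then insert the equation, split off $\Re\fra(t,Pu,u-Pu)$, and close with Gronwall; your two sandwich inequalities are correct and are precisely the analogue of the comparison $\eta_h\le\theta_h$ in the paper. However, the proof is not complete, for two reasons. First, a preliminary gap: nothing in your argument shows $Pu\in L^2(I;V)$; hypothesis \eqref{eq:invcrit2} only gives $Pu(t)\in V$ for a.e.\ $t$, with no quantitative bound. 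Without square integrability of $\norm{Pu(\cdot)}_V$, the claimed derivative $2\Re\langle u',u-Pu\rangle$ is not known to be in $L^1(I)$, so the assertion ``$\phi\in W^{1,1}(I)$ with this derivative'' does not even make sense, and every one of your limit passages (Lebesgue-point arguments, partition sums) lacks an integrable majorant. The paper obtains this first, in Lemma~\ref{lem:invbound}: testing \eqref{eq:invcrit2} against $u(t)-Pu(t)$ and using $H$-ellipticity and boundedness gives $\norm{u(t)-Pu(t)}_V^2\le\tfrac1{\alpha^2}\bigl(M^2\norm{u(t)}_V^2+\norm{f(t)}_{V'}^2\bigr)+\tfrac{2\omega}{\alpha}\norm{u(t)-Pu(t)}_H^2$, whence $Pu\in L^2(I;V)$. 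This step genuinely uses the hypothesis of the theorem and must precede any version of Lemma~\ref{lem:invariance}.

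Second, and decisively: the step you yourself flag as open --- upgrading a.e.\ two-sided differentiability of the merely continuous $\phi$ to absolute continuity, i.e.\ excluding a Cantor-type singular part --- \emph{is} the content of Lemma~\ref{lem:invariance}, and your sketch does not close it. Your gap term along a partition, $\sum_k\norm{u(r_{k+1})-u(r_k)}_H^2=2\int\Re\langle u',u-u_\pi\rangle\,\mathrm{d}s$ (with $u_\pi$ the left-endpoint interpolant), is \emph{not} known to vanish for arbitrary partitions: the identity itself requires $u(r_k)\in V$ at every node, and the bound on it involves $\norm{u-u_\pi}_{L^2(I;V)}$, which can be huge when nodes fall at points where $\norm{u(r_k)}_V$ is large, since $u$ is only in $L^2(I;V)$. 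Making this work requires a selection argument (partitions through suitable Lebesgue points so that $u_\pi\to u$ and $(Pu)_\pi\to Pu$ in $L^2(I;V)$), which you have not supplied; with it, your route would be completable, but it is exactly the nontrivial part. The paper avoids pointwise differentiation altogether: it works with the difference quotients $\theta_h$ (projection frozen at $t$) and $\eta_h$ (frozen at $t+h$), uses the best-approximation sandwich $\eta_h\le\theta_h$, shows $\tfrac1h\theta_h,\tfrac1h\eta_h\to2\Re\langle u',u-Pu\rangle$ in $L^1$ via continuity of translations in $L^2(I;V)$ and $L^2(I;V')$ (here $Pu\in L^2(I;V)$ is used), and then tests against $\varphi\in C_c^\infty(a,b)$, splitting the integral on $\{\varphi\ge0\}$ and $\{\varphi<0\}$ so that on each set the correct member of the sandwich provides the one-sided bound; replacing $\varphi$ by $-\varphi$ turns the resulting distributional inequality into an equality. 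Identifying the \emph{distributional} derivative of $\phi$ with an $L^1$ function yields $\phi\in W^{1,1}(I)$ at once --- distributional derivatives see singular parts, so the Cantor obstruction that blocks your argument never arises.
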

Note that such a constant $\omega$ always exists, since $\fra$ is $H$-elliptic. If $\fra$ is coercive, then $\omega$ can be chosen to be negative.
In that case the solution approaches $\Conv$ exponentially fast.
The following lemmas are crucial ingredients for the proof.

\begin{lemma}\label{lem:invbound}
	Let $v \in V$ such that $Pv \in V$ and $h \in V'$. Suppose that
	\begin{equation}\label{eq:lemPbound}
		\Re \fra(t,Pv,v-Pv) \ge \Re\langle h, v-Pv \rangle \quad (t \in I).
	\end{equation}
	Then 
	\[
		   \norm{v-Pv}_V^2 \le
            \tfrac 1{\alpha^2} \left( M^2\norm{v}^2_V + \norm{h}^2_{V'}  \right)+ \tfrac{2\omega}\alpha\norm{v-Pv}_H^2,
	\]
	where $M$, $\alpha >0$ and $\omega$ are constants such that \eqref{eq:Vbounded} and \eqref{eq:qcoercive} hold.
\end{lemma}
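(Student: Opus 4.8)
The plan is to derive everything from a single application of $H$-ellipticity to the difference $w := v - Pv$, which by assumption lies in $V$. Fixing $t \in I$, inequality \eqref{eq:qcoercive} reads $\alpha \norm{w}_V^2 \le \Re \fra(t,w,w) + \omega \norm{w}_H^2$. The reason for testing the form at $(w,w)$ rather than at $v$ is that sesquilinearity in the first slot splits it as $\fra(t,w,w) = \fra(t,v,w) - \fra(t,Pv,w)$, and the second summand is exactly the quantity bounded from below in the hypothesis \eqref{eq:lemPbound}. Rewriting \eqref{eq:lemPbound} as $-\Re \fra(t,Pv,w) \le -\Re \langle h, w \rangle$ and inserting it into the split gives the key inequality
\[
    \alpha \norm{w}_V^2 \le \Re \fra(t,v,w) - \Re \langle h, w \rangle + \omega \norm{w}_H^2 .
\]

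Next I would collapse the first two terms on the right into a single duality pairing. Since $\A(t)$ is the operator associated with $\fra(t,\cdot,\cdot)$ we have $\fra(t,v,w) = \langle \A(t)v, w \rangle$, so the right-hand side equals $\Re \langle \A(t)v - h, w \rangle + \omega \norm{w}_H^2 \le \norm{\A(t)v - h}_{V'} \norm{w}_V + \omega \norm{w}_H^2$. Boundedness \eqref{eq:Vbounded} yields $\norm{\A(t)v}_{V'} \le M \norm{v}_V$, hence $\norm{\A(t)v - h}_{V'} \le M\norm{v}_V + \norm{h}_{V'}$. Young's inequality applied so as to peel off $\tfrac{\alpha}{2}\norm{w}_V^2$ from the product term, followed by subtracting it and dividing by $\tfrac{\alpha}{2}$, then leaves an estimate of exactly the asserted shape, with the quadratic data term carrying a weight of order $\tfrac{1}{\alpha^2}$ and the $H$-norm the weight $\tfrac{2\omega}{\alpha}$.

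I expect the only genuinely delicate point to be the constant bookkeeping in this last step: the clean route via $\norm{\A(t)v - h}_{V'}$ naturally produces $\tfrac{1}{\alpha^2}\,(M\norm{v}_V + \norm{h}_{V'})^2$, and recovering the \emph{separated} right-hand side $M^2\norm{v}_V^2 + \norm{h}_{V'}^2$ with the prefactor $\tfrac{1}{\alpha^2}$ requires distributing the cross term $2M\norm{v}_V\norm{h}_{V'}$ with care, since a crude split of the two products costs a harmless factor in the constant. For the intended use as an integrable majorant any fixed admissible constant suffices, so I would fix the Young parameters once and not optimize further. Finally, it is worth recording what the argument does \emph{not} use: beyond $Pv \in V$ and the inequality \eqref{eq:lemPbound} itself, no property of the orthogonal projection or convexity of $\Conv$ enters, so the lemma is a purely form-theoretic coercivity estimate, with the geometry of $\Conv$ relevant only insofar as it is what supplies \eqref{eq:lemPbound} in the applications.
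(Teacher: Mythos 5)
Your argument is, in substance, the paper's own proof: $H$-ellipticity applied to $w=v-Pv$, the sesquilinear split $\fra(t,w,w)=\fra(t,v,w)-\fra(t,Pv,w)$, insertion of \eqref{eq:lemPbound}, boundedness, then a Young inequality. Writing $\fra(t,v,w)-\langle h,w\rangle=\langle \A(t)v-h,w\rangle$ before estimating is only a cosmetic repackaging of the paper's termwise bounds $M\norm{v}_V\norm{w}_V$ and $\norm{h}_{V'}\norm{w}_V$.

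The point you flag as delicate bookkeeping is, however, a genuine discrepancy, and your instinct not to chase the separated constant $\tfrac{1}{\alpha^2}$ is sound: that constant is not merely awkward to reach, it is unattainable, because the lemma as stated is false. Take $V=H=\R$, $\fra(t,x,y)=xy$, so that $M=\alpha=1$, $\omega=0$ are admissible in \eqref{eq:Vbounded} and \eqref{eq:qcoercive}; let $\Conv=\{-1\}$, $v=1$, $h=-1$. Then $Pv=-1\in V$, $v-Pv=2$, and \eqref{eq:lemPbound} holds with equality ($-2\ge-2$), but $\norm{v-Pv}_V^2=4>2=\tfrac{1}{\alpha^2}\bigl(M^2\norm{v}_V^2+\norm{h}_{V'}^2\bigr)$. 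What your route actually delivers, namely
\[
\norm{v-Pv}_V^2\le\tfrac{1}{\alpha^2}\bigl(M\norm{v}_V+\norm{h}_{V'}\bigr)^2+\tfrac{2\omega}{\alpha}\norm{v-Pv}_H^2,
\]
is correct (and attained with equality in this example); crudely splitting the square gives the separated form with $\tfrac{2}{\alpha^2}$ in place of $\tfrac{1}{\alpha^2}$, which is the statement the lemma should carry. The paper's own proof contains the corresponding slip: applying $xy\le\tfrac\alpha2 x^2+\tfrac{1}{2\alpha}y^2$ to each of the two products peels off $\tfrac\alpha2\norm{w}_V^2$ twice, i.e.\ $\alpha\norm{w}_V^2$ in total, which cancels the left-hand side entirely, so its final display does not follow as written. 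None of this matters downstream: the lemma is invoked only to conclude $Pu\in L^2(I;V)$ in the proof of Theorem~\ref{thm:invstab} and boundedness of $(Pu(t_n))_{n}$ in $V$ in the proof of Corollary~\ref{cor:pointwiseest}, and for both purposes any fixed constant serves. So your proof is correct, coincides with the paper's approach, and the constant you end up with is the right one.
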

\begin{proof}
    By H-ellipticity and boundedness of $\fra$ and \eqref{eq:lemPbound} we have for $t \in I$
    \begin{multline*}
        \alpha \norm{v-Pv}_V^2 - \omega\norm{v-Pv}_H^2
            \le \Re \fra(t,v-Pv, v-Pv)\\
            \le \Re \fra(t,v,v-Pv)
                - \Re \langle{h}, {v-Pv}\rangle\\
           	\le M \norm{v}_V \norm{v-Pv}_V
                 + \norm{h}_{V'} \norm{v-Pv}_{V}
    \end{multline*}
    From this and the inequality $x y \le \frac \alpha {2} x^2 + \frac 1{2\alpha} y^2$, $x,y \in \R$,
    we obtain that 
    \begin{equation*}
        \frac\alpha 2\norm{v-Pv}_V^2 \le
            \frac 1{2\alpha} \left( M^2\norm{v}^2_V + \norm{h}^2_{V'}  \right)+ \omega\norm{v-Pv}_H^2. \tag*{\qedhere}
    \end{equation*}
\end{proof}

\begin{lemma}\label{lem:invariance}
    Let $u \in \MRs(I)$, such that $Pu \in L^2(I;V)$.
    Then for  $t,s \in I$ with $s \le t$ we have
    \[
        \norm{u(t)- Pu(t)}_H^2 - \norm{u(s)-Pu(s)}_H^2
            = 2 \int_s^t \Re \langle{u'(r)}, {u(r)-Pu(r)}\rangle \ \mathrm{d} r.
    \]
\end{lemma}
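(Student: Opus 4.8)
The plan is to reduce the identity to a statement about the scalar function $g(r) := \tfrac12\norm{u(r)-Pu(r)}_H^2$, which equals $\phi(u(r))$ for the convex function $\phi(x):=\tfrac12\, d(x,\Conv)^2 = \tfrac12\norm{x-Px}_H^2$. First I would record the standard fact (Moreau) that $\phi\colon H\to\R$ is convex and Fréchet differentiable with $\nabla\phi(x)=x-Px$. Reading off the two subgradient inequalities of $\phi$ at the points $u(s)$ and $u(t)$ then yields, for all $s\le t$ in $I$, the two-sided sandwich
\[
\Re\big(u(s)-Pu(s)\,\big|\,u(t)-u(s)\big)_H \le g(t)-g(s) \le \Re\big(u(t)-Pu(t)\,\big|\,u(t)-u(s)\big)_H .
\]
Note this holds pointwise everywhere (no exceptional null set) and involves only the $H$-inner product, so it is insensitive to the poor $V$-regularity of $r\mapsto Pu(r)$.

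Next I would rewrite both sides through the duality pairing. Since $u\in H^1(I;V')$ we have $u(t)-u(s)=\int_s^t u'(r)\,\mathrm{d}r$ in $V'$, and since $u,Pu\in L^2(I;V)$ the element $w(r):=u(r)-Pu(r)$ lies in $V$ for a.e.\ $r$. Using $H\hookrightarrow V'$ and the symmetry of $\Re(\cdot\mid\cdot)_H$, for a.e.\ value of the sampled endpoint one has $\Re(w(s)\mid u(t)-u(s))_H=\int_s^t\Re\langle u'(r),w(s)\rangle\,\mathrm{d}r$, and likewise with $w(t)$ on the right. The candidate derivative is $\psi(r):=\Re\langle u'(r),w(r)\rangle$, which lies in $L^1(I)$ because $u'\in L^2(I;V')$ and $w\in L^2(I;V)$.

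The core step is to show $g'=\psi$ in $\mathcal{D}'(a,b)$; together with continuity of $g$ (from $u\in C(I;H)$ and nonexpansiveness of $P$) this gives $g\in W^{1,1}(I)$ and hence the claimed fundamental theorem of calculus, after multiplying by $2$. Fix $\varphi\in C_c^\infty(a,b)$ with $\varphi\ge0$. Multiplying $g(r+h)-g(r)$ by $\varphi(r)/h$ and integrating, I squeeze $\tfrac1h\int[g(r+h)-g(r)]\varphi$ between the two Riemann-type integrals coming from the lower and upper bounds. For the lower integral one obtains $\tfrac1h\int\big(\int_r^{r+h}\Re\langle u'(\tau),w(r)\rangle\,\mathrm{d}\tau\big)\varphi(r)\,\mathrm{d}r$; by Lebesgue differentiation the inner average converges a.e.\ to $\psi(r)$, and a Hardy--Littlewood maximal-function bound ($\norm{w}_V\in L^2$ times the maximal function of $\norm{u'}_{V'}\in L^2$, giving an $L^1$ majorant) justifies dominated convergence, so this tends to $\int\psi\varphi$. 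The upper integral, after the substitution $\sigma=r+h$, becomes $\tfrac1h\int\big(\int_{\sigma-h}^\sigma\Re\langle u'(\tau),w(\sigma)\rangle\,\mathrm{d}\tau\big)\varphi(\sigma-h)\,\mathrm{d}\sigma$ and converges to the same limit by the identical argument with backward averages. The squeeze yields $\lim_{h\to0^+}\tfrac1h\int[g(r+h)-g(r)]\varphi=\int\psi\varphi$; since the left-hand side also equals $-\int g\varphi'$, one obtains $\int g\varphi'=-\int\psi\varphi$ for all nonnegative $\varphi$, whence for all $\varphi$ by writing an arbitrary test function as a difference of nonnegative ones.

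I expect the main obstacle to be exactly what forces this detour: because $Pu$ is only assumed to lie in $L^2(I;V)$ and is \emph{not} known to belong to $H^1(I;V')$, the function $w=u-Pu$ is not in $\MRs(I)$ and is merely $L^2$ (not continuous) in $V$, so neither the product rule of Lemma~\ref{lem:productrule} nor any naive pointwise Riemann-sum approximation of $\int\langle u',w\rangle$ is available. The convexity sandwich sidesteps differentiating $Pu$ altogether, and the delicate point is to verify that the upper and lower bounds---which sample $w$ at different endpoints of each subinterval---collapse to the same limit; this is where the Lebesgue-point and maximal-function estimates are essential, and care is needed because $w$ is sampled at the moving endpoint in the upper bound.
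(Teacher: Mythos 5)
Your proof is correct and rests on the same key idea as the paper's: a two-sided bound on the increment of $\norm{u-Pu}_H^2$ in which the projection is sampled at only one endpoint, obtained from the variational property of $P$ --- your Moreau subgradient inequalities are exactly the mechanism the paper implements by hand as $\eta_h(t)\le \norm{u(t+h)-Pu(t+h)}_H^2-\norm{u(t)-Pu(t)}_H^2 \le \theta_h(t)$ --- followed by identification of the distributional derivative of $\norm{u-Pu}_H^2$ and the fundamental theorem of calculus for continuous $W^{1,1}$ functions. The differences are in execution, and they are worth comparing. For the limit passage you argue pointwise: vector-valued Lebesgue differentiation for the forward and backward Steklov averages of $u'$, with dominated convergence justified by the majorant $\norm{u-Pu}_V$ times the Hardy--Littlewood maximal function of $\norm{u'}_{V'}$, which is in $L^1(I)$ as a product of two $L^2$ functions. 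The paper avoids maximal functions entirely: it observes that $\tfrac1h\theta_h,\tfrac1h\eta_h\to 2\Re\langle u',u-Pu\rangle$ strongly in $L^1$, because $\tfrac1h\int_\cdot^{\cdot+h}u'(s)\,\mathrm{d}s\to u'$ in $L^2(\,\cdot\,;V')$ while $u(\cdot+h)\to u$ and $Pu(\cdot+h)\to Pu$ in $L^2(\,\cdot\,;V)$ by translation continuity of $L^2$ functions; this lighter argument also disposes automatically of the endpoint-shift bookkeeping ($\varphi(\cdot-h)$ versus $\varphi$) that your backward average requires. For the signs, you prove equality directly for $\varphi\ge0$ and extend by linearity --- a small care point here is that $\varphi^{\pm}$ are not smooth, so one should write $\varphi=(\varphi+c\chi)-c\chi$ with $0\le\chi\in C_c^\infty(a,b)$ equal to $1$ on $\operatorname{supp}\varphi$ and $c\ge\norm{\varphi}_\infty$ --- whereas the paper keeps a general $\varphi$, bounds the difference quotient by $\theta_h\1_{\{\varphi\ge0\}}+\eta_h\1_{\{\varphi<0\}}$ to get one inequality, and then substitutes $-\varphi$. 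Both routes are sound; yours packages the convexity step as a citation to Moreau's theorem and trades the paper's elementary $L^2$-translation argument for standard but heavier real-analysis machinery.
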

\begin{proof}
    	Recall that $\MRs(I) \hookrightarrow C(I;H)$
    	and consequently $Pu \in C(I;H)$, since $P\colon H \to H$ is a contraction.
    	Hence, it suffices to show $\norm{u-Pu}_H^2 \in W^{1,1}(I)$ with 
    	\[
    		(\norm{u-Pu}_H^2)' = 2\Re \langle u', u-Pu \rangle .
	\]
	Let $0<\delta<b-a$ and $h \in (a,b-\delta)$.
	We set for $t \in (a,b-\delta)$
	\begin{multline*}
		\theta_h(t) := \langle u(t+h)-u(t) , u(t+h)-Pu(t) \rangle
					+ \overline{\langle u(t+h)-u(t) , u(t)-Pu(t)\rangle}
	\end{multline*}
	and
	\begin{multline*}
		\eta_h(t) := \langle u(t+h)-u(t) , u(t+h)-Pu(t+h)\rangle\\ 
					+ \overline{\langle u(t+h)-u(t) , u(t)-Pu(t+h)\rangle}.
	\end{multline*}
	Since $\frac 1 h \int_.^{.+h} u'(s)\ \mathrm{d}{s} \to u'$ in $L^2((a,b-\delta);V')$ and $u(.+h) \to u$, $Pu(.+h) \to Pu$ in $L^2((a,b-\delta);V)$ as $h \to 0$
	, we have
	\[
		 \tfrac 1 h \theta_h,\tfrac 1 h \eta_h  \to  2\Re \langle u' , u-Pu \rangle \quad (h \to 0)
	\]
	in $L^1((a,b-\delta))$.
	Moreover, since $Pu(t)$ is the best approximation of $u(t)$ in $\Conv$ and $Pu(t+h)$ is the best approximation of $u(t+h)$ in $\Conv$, we have
	\begin{multline*}
		\eta_h(t) = \norm{u(t+h)-Pu(t+h)}^2_H - \norm{u(t)-Pu(t+h)}^2_H\\
			\le \norm{u(t+h)-Pu(t+h)}^2_H - \norm{u(t)-Pu(t)}^2_H\\
			\le \norm{u(t+h)-Pu(t)}^2_H - \norm{u(t)-Pu(t)}^2_H = \theta_h(t).
	\end{multline*}
	Let $\varphi \in C^\infty_c(a,b)$, $0<\delta<b-a$ such that $\operatorname{supp}\varphi \subset (a,b-\delta)$ and $h \in (0,\delta)$, then
	\begin{multline*}
		\frac 1 h \int_a^b \norm{u-Pu}_H^2 \left[ \varphi(t-h)-\varphi \right] \ \mathrm{d}{t}\\
			= \frac 1 h \int_a^b \left[ \norm{u(t+h)-Pu(t+h)}^2_H - \norm{u-Pu}^2_H \right] \varphi \ \mathrm{d}{t}\\
			\le \frac 1 h \int_a^b \left[ \theta_h \1_{\{\varphi \ge 0\}} + \eta_h \1_{\{\varphi < 0\}}\right] \varphi \ \mathrm{d}{t}
	\end{multline*}
	Now taking the limit $h \to 0$ shows 
	\begin{equation}\label{eq:diff_formula_inv}
		-\int_a^b \norm{u-Pu}_H^2  \varphi' \ \mathrm{d}{t} \le \int_a^b 2 \Re \left\langle u' , u -Pu \right\rangle \varphi \ \mathrm{d}{t}.
	\end{equation}
	Finally, if we replace $\varphi$ by $-\varphi$ we obtain equality in \eqref{eq:diff_formula_inv}.
\end{proof}

\begin{proof}[Proof of Theorem~\ref{thm:invstab}]
	Let $u_a \in H$, $f \in L^2(I;V')$ and $u \in \MRs(I)$ be the solution of $u'+\A u =f$, $u(a) = u_a$.
	Suppose that \eqref{eq:invcrit2} holds for a.e.\ $t \in I$.
	By Lemma~\ref{lem:invbound} and \eqref{eq:invcrit2} we obtain that $Pu \in L^2(I;V)$.
	Thus, by Lemma \ref{lem:invariance} for all $t \in I$ we have 
    	\begin{multline*}
		\norm{u(t)- Pu(t)}_H^2 -\norm{u_a- Pu_a}_H^2= 2 \int_a^t \Re \langle{u'}, {u-Pu} \rangle \ \mathrm{d} s\\
			= 2 \int_a^t \Re \langle{f-\A u}, {u-Pu}  \rangle \ \mathrm{d} s
			\le -2 \int_a^t  \Re \fra(s,u-Pu,u-Pu) \ \mathrm{d} s
    	\end{multline*}
    where we used the assumption $\eqref{eq:invcrit2}$ for  the inequality.
   Thus
    \[
        \norm{u(t)- Pu(t)}_H^2 \le \norm{u_a- Pu_a}_H^2+ 2 \omega \int_a^t \norm{u-Pu}_H^2 \ \mathrm{d} s \quad (t\in I).
    \]
	Now the claim of the theorem follows by Gronwall's lemma.
\end{proof}

%%%%%%%%%%%%%%%%%%%%%%%%%%%%%%%%%%%%%%%%%%%%%%%%%%%%%%%%%%%%%%%%%%%%%%%%%%%%%%%%%%%%%%%%%%%%%%%%%%%
\section{Necessity}\label{sec:nec}
Let $I:=[a,b]$ where $-\infty < a < b <\infty$ and let $V, H$ be Hilbert spaces over the field $\K$ such that $V \overset d \hookrightarrow H$. 
Suppose $\fra \in \Form(I;V,H)$, $\A \sim \fra$ and $f \in L^2(I;V')$.
Let $\Conv \subset H$ be a closed convex set and let $P\colon H \to \Conv$ be the orthogonal projection onto $\Conv$.

We say that $(\fra, f)$ is $\Conv$ \emph{invariant} if for every $c \in I$ and every $u \in \MRs([c,b])$ with $u'+\A u = f$, $u(c) \in \Conv$ 
we have $u(t) \in \Conv$ for all $t \in [c,b]$.
\begin{theorem}\label{thm:necInv}
	Suppose $(\fra, f)$ is $\Conv$ invariant.
	Then for every $u \in \MRs(I)$ with $u'+\A u =f$ we have
	$Pu \in L^2(I;V)$ and
	\begin{equation}\label{eq:necessity}
		 \Re \fra(t, Pu(t), u(t)-Pu(t)) \ge  \Re \langle{f(t)}, {u(t)-Pu(t)}\rangle \quad (\text{a.e.\ } t \in I).
	\end{equation}
\end{theorem}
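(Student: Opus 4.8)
The plan is to recover the pointwise information in \eqref{eq:necessity} by comparing the given solution $u$, at each time $s$, with the solution \emph{restarted from its own projection}. Fix $u \in \MRs(I)$ with $u' + \A u = f$, and for each $s \in [a,b)$ let $w_s \in \MRs([s,b])$ be the solution of $w'+\A w = f$ with $w_s(s) = Pu(s) \in \Conv$. By the assumed $\Conv$ invariance, $w_s(r) \in \Conv$ for all $r \in [s,b]$. Writing $g(r) := \norm{u(r)-Pu(r)}_H^2 = \operatorname{dist}(u(r),\Conv)^2$, which is continuous since $u,Pu \in C(I;H)$, and $y_s := u - w_s$, this gives
\[
    g(r) \le \norm{u(r)-w_s(r)}_H^2 = \norm{y_s(r)}_H^2 \qquad (r \ge s),
\]
with equality at $r=s$. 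As $y_s$ solves the homogeneous equation $y_s'+\A y_s = 0$ with $y_s(s)=u(s)-Pu(s)=:\xi_s$, Lemma~\ref{lem:productrule} furnishes the energy identity $\norm{y_s(r)}_H^2 - g(s) = -2\int_s^r \Re\fra(\sigma,y_s,y_s)\,\mathrm{d}\sigma$.

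First I would pass to a weak differential inequality. Fixing $0 \le \varphi \in C^\infty_c(a,b)$ and small $\epsilon>0$, the bound $g(s+\epsilon) \le \norm{y_s(s+\epsilon)}_H^2$ and the energy identity yield
\[
    \int_a^b \tfrac{g(s+\epsilon)-g(s)}{\epsilon}\,\varphi \,\mathrm{d}s \le \int_a^b h_\epsilon\,\varphi \,\mathrm{d}s, \qquad h_\epsilon(s):=\tfrac{-2}{\epsilon}\int_s^{s+\epsilon}\Re\fra(\sigma,y_s,y_s)\,\mathrm{d}\sigma .
\]
The left-hand side tends to $-\int_a^b g\,\varphi'$ by continuity of $g$, while $H$-ellipticity and Gronwall's lemma applied to $\norm{y_s}_H^2$ bound $h_\epsilon$ above by a constant independent of $\epsilon$, so the reverse Fatou lemma is available. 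I would extract $Pu \in L^2(I;V)$ \emph{before} the inequality itself: since the $H$-averages of $y_s$ over $[s,s+\epsilon]$ tend to $\xi_s$, weak lower semicontinuity of $\norm{\cdot}_V$ gives $\liminf_{\epsilon}\tfrac1\epsilon\int_s^{s+\epsilon}\norm{y_s}_V^2 \ge \norm{\xi_s}_V^2$ (with value $+\infty$ precisely when $\xi_s \notin V$, since a $V$-bounded average would converge weakly in $V$ to its $H$-limit $\xi_s$). Combining this with \eqref{eq:qcoercive} yields $\limsup_\epsilon h_\epsilon(s) \le -2\alpha\norm{\xi_s}_V^2 + 2\omega\,g(s)$, and reverse Fatou turns the display into
\[
    2\alpha\int_a^b \norm{u-Pu}_V^2\,\varphi \,\mathrm{d}s \le \int_a^b g\,\varphi' \,\mathrm{d}s + 2\omega\int_a^b g\,\varphi\,\mathrm{d}s .
\]
Letting $\varphi$ increase to the indicator of a compact subinterval bounds $\int \norm{u-Pu}_V^2$ (the right-hand side stays finite because $g$ is continuous); hence $\xi_s \in V$ for a.e.\ $s$ and $u-Pu \in L^2(I;V)$, so $Pu \in L^2(I;V)$.

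With $Pu \in L^2(I;V)$ in hand, Lemma~\ref{lem:invariance} makes $g$ absolutely continuous; using $u'=f-\A u$ and $\fra(\sigma,u,u-Pu)=\fra(\sigma,Pu,u-Pu)+\fra(\sigma,u-Pu,u-Pu)$, the quotient $\tfrac1\epsilon(g(s+\epsilon)-g(s))$ converges at a.e.\ Lebesgue point $s$ to $2[\Re\langle f(s),\xi_s\rangle-\Re\fra(s,Pu(s),\xi_s)-\Re\fra(s,\xi_s,\xi_s)]$. Since this quotient is $\le h_\epsilon(s)$, letting $\epsilon\to0$ gives
\begin{multline*}
    \Re\langle f(s), u(s)-Pu(s)\rangle - \Re\fra(s,Pu(s),u(s)-Pu(s))\\
        \le \Re\fra(s,\xi_s,\xi_s) - \limsup_{\epsilon\to0}\tfrac1\epsilon\int_s^{s+\epsilon}\Re\fra(\sigma,y_s,y_s)\,\mathrm{d}\sigma ,
\end{multline*}
so it remains to prove $\limsup_{\epsilon\to0}\tfrac1\epsilon\int_s^{s+\epsilon}\Re\fra(\sigma,y_s,y_s)\,\mathrm{d}\sigma \ge \Re\fra(s,\xi_s,\xi_s)$ for a.e.\ $s$, which then yields exactly \eqref{eq:necessity}.

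I expect this last bound to be the main obstacle. Since $\fra$ is merely measurable in $t$ and the vector $\xi_s$ itself varies with $s$, it is not a plain Lebesgue-point statement. The natural route is to compare $\Re\fra(\sigma,y_s,y_s)$ with $\Re\fra(\sigma,u-Pu,u-Pu)$, whose average converges to $\Re\fra(s,\xi_s,\xi_s)$ at a Lebesgue point of the $L^1$ density $\sigma\mapsto\Re\fra(\sigma,u-Pu,u-Pu)$, now available because $u-Pu\in L^2(I;V)$. The error term involves $\fra$ evaluated at the difference $Pu-w_s$, which vanishes in $H$ at $\sigma=s$; the delicate point is to control this difference in the $V$-average near the restart time, using weak $V$-convergence, the lower semicontinuity already established, and the energy bound $\limsup_\epsilon \tfrac1\epsilon\int_s^{s+\epsilon}\norm{y_s}_V^2 \le \tfrac1{2\alpha}\abs{g'(s)} + \tfrac{\abs\omega}{\alpha} g(s)$, which follows from the same energy identity together with $\norm{y_s(s+\epsilon)}_H^2\ge g(s+\epsilon)$ and $g\in W^{1,1}(I)$. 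Controlling the restarted solution at its own initial time, under only measurability of $\fra$ in $t$, is where I would expect a genuinely new argument to be required.
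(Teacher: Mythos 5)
Your proposal contains a genuine gap, and it is exactly the one you flag at the end: the inequality
\[
	\limsup_{\epsilon\to0}\tfrac1\epsilon\int_s^{s+\epsilon}\Re\fra(\sigma,y_s,y_s)\,\mathrm{d}\sigma \;\ge\; \Re\fra(s,\xi_s,\xi_s) \qquad (\text{a.e.\ } s)
\]
is not a technical remainder but the entire difficulty of the theorem, and the route you sketch does not close it. The obstruction is twofold. First, the restarted solution $y_s$ has a boundary layer at its own initial time: the energy identity only controls the \emph{averages} $\tfrac1\epsilon\int_s^{s+\epsilon}\norm{y_s}_V^2\,\mathrm{d}\sigma$, not $\norm{y_s(\sigma)}_V$ pointwise, so $\norm{y_s}_V^2$ may concentrate inside $[s,s+\epsilon]$ as $\epsilon\to0$. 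Second, $\fra$ is merely measurable in time, so replacing $\fra(\sigma,\cdot,\cdot)$ by the frozen form $\fra(s,\cdot,\cdot)$ (for which Jensen's inequality plus weak lower semicontinuity would indeed give the desired lower bound) produces an error dominated by $\tfrac1\epsilon\int_s^{s+\epsilon}\norm{\A(\sigma)-\A(s)}_{\mathcal{L}(V,V')}\norm{y_s(\sigma)}_V^2\,\mathrm{d}\sigma$; since the weight $\norm{y_s}_V^2$ is only average-bounded and not equi-integrable, no Lebesgue-point argument controls this product (and $t\mapsto\A(t)$ need not even admit operator-norm Lebesgue points). Comparing $y_s$ with $u-Pu$, as you suggest, runs into the same concentration problem for the difference $Pu(s)-w_s(\sigma)$ near $\sigma=s$. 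So the proof is incomplete precisely at the step that yields \eqref{eq:necessity}.

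The paper circumvents this by never taking a pointwise-in-time limit of restarted solutions. It restarts only at the points $t_k^n$ of a partition, glues the restarted solutions into $v_n$, and telescopes the energy identities over the partition, which yields the \emph{integrated} bound $\norm{\tilde u(b)}_H^2-\norm{\tilde u(a)}_H^2 \le -\int_a^b 2\Re\fra(s,\tilde v_n,\tilde v_n)\,\mathrm{d}s$ with $\tilde v_n := u-v_n$. The boundary layers are then handled in integrated form (Lemma~\ref{lem:L^2convergency}), giving $\tilde v_n\to\tilde u$ in $L^2(I;H)$; $H$-ellipticity upgrades this to $\tilde v_n\rightharpoonup\tilde u$ in $L^2(I;V)$, and weak lower semicontinuity of the \emph{time-integrated} quadratic form --- which is immune to the measurability-in-$t$ issue --- gives $\norm{\tilde u(b)}_H^2-\norm{\tilde u(a)}_H^2 \le -\int_a^b 2\Re\fra(s,\tilde u,\tilde u)\,\mathrm{d}s$. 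Combining with Lemma~\ref{lem:invariance} produces $0\ge\int_J\Re\langle f-\A Pu,\tilde u\rangle\,\mathrm{d}s$ for every subinterval $J\subset I$, and only at this stage, when the integrand is a \emph{fixed} $L^1$ function, is Lebesgue differentiation invoked to obtain \eqref{eq:necessity}. For what it is worth, the first half of your argument (continuous restarts, reverse Fatou, Jensen plus weak lower semicontinuity of $\norm{\cdot}_V$) is, modulo routine measurability checks on $s\mapsto h_\epsilon(s)$, a correct and genuinely different derivation of $Pu\in L^2(I;V)$; but the paper obtains that membership along the way essentially for free, so your approach only becomes competitive if the pointwise step can be closed --- and the natural way to close it is to integrate before differentiating, i.e.\ the partition-and-telescope argument that constitutes the paper's proof.
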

Note that if $u(a) \in \Conv$ in the theorem above, then \eqref{eq:necessity} holds trivially, since $u(t)-Pu(t) =0$ for all $t \in [a,b]$.
But it is remarkable that \eqref{eq:necessity} holds for any initial value $u(a) \in H$.

Next we want to deduce a pointwise version from Theorem~\ref{thm:necInv}, which is in the spirit of the Beurling--Deny--Ouhabaz criterion.
Some regularity assumptions are needed for the proof. 
We say that $\fra$ is \emph{right-continuous} if $\lim_{t\downarrow c}\norm{\A(c)-\A(t)}_{\mathcal{L}(V,V')} = 0$ for every $c \in I$.
\begin{corollary}\label{cor:pointwiseest}
	Suppose that $\fra$ is right-continuous and that
	there exists a dense subspace $\tilde V$ of $V$, such that for every $c \in I$, $u_c \in \tilde V$ 
	the solution $u \in \MRs([c,b])$ of $u'+\A u = f$, $u(c)=u_c$ is in $C([c,b]; V)$.
	Then $(\fra, f)$ is $\Conv$ invariant if and only if $PV \subset V$ and 
	\begin{equation}\label{eq:pointwiseest}
		\Re \fra(t, Pv, v-Pv) \ge  \Re \langle{f(t)}, {v-Pv}\rangle \quad (\text{a.e.\ }t \in I,\, v \in V).
	\end{equation}
\end{corollary}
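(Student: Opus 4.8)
The plan is to prove the two implications separately; the sufficiency is immediate from Corollary~\ref{cor:invariance}, while the necessity carries the entire weight of the argument.

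\emph{Sufficiency.} Suppose $PV \subset V$ and that \eqref{eq:pointwiseest} holds. To verify $\Conv$ invariance, fix $c \in I$ and a solution $u \in \MRs([c,b])$ of $u'+\A u = f$ with $u(c) \in \Conv$. Since \eqref{eq:pointwiseest} holds for a.e.\ $t \in I$, it holds for a.e.\ $t \in [c,b]$, so Corollary~\ref{cor:invariance} applied on the interval $[c,b]$ gives $u(t) \in \Conv$ for all $t \in [c,b]$. As $c$ is arbitrary, $(\fra,f)$ is $\Conv$ invariant.

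\emph{Necessity; reduction and the estimate on $P$.} Assume $(\fra,f)$ is $\Conv$ invariant. Applying Theorem~\ref{thm:necInv} on each subinterval $[c,b]$, every solution $u \in \MRs([c,b])$ of $u'+\A u=f$ satisfies $Pu \in L^2([c,b];V)$ and the a.e.-in-$t$ inequality \eqref{eq:necessity}. I would first establish $PV \subset V$ together with a global bound on $P$. Pick $v \in \tilde V$ and a Lebesgue point $c_0 \in [a,b)$ of $t \mapsto \norm{f(t)}_{V'}^2$; let $u$ be the solution on $[c_0,b]$ with $u(c_0)=v$, so $u \in C([c_0,b];V)$ and \eqref{eq:necessity} holds for a.e.\ $t \in [c_0,b]$. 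Applying Lemma~\ref{lem:invbound} with $h=f(t)$ to this a.e.\ inequality bounds $\norm{u(t)-Pu(t)}_V$ in terms of $\norm{u(t)}_V$, $\norm{f(t)}_{V'}$ and $\norm{u(t)-Pu(t)}_H$. Choosing $t_n \downarrow c_0$ in the full-measure set where \eqref{eq:necessity} holds and along which $\norm{f(t_n)}_{V'}$ stays bounded, we have $u(t_n) \to v$ in $V$ and $Pu(t_n) \to Pv$ in $H$, so $Pu(t_n)$ is bounded in $V$; extracting a weakly convergent subsequence and matching the $H$-limit forces $Pu(t_n) \rightharpoonup Pv$ in $V$, whence $Pv \in V$. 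Weak lower semicontinuity of $\norm{\cdot}_V$ passes the estimate to the limit,
\[
\norm{v-Pv}_V^2 \le \tfrac{1}{\alpha^2}\left(M^2\norm{v}_V^2 + \norm{f(c_0)}_{V'}^2\right) + \tfrac{2\omega}{\alpha}\norm{v-Pv}_H^2 \quad (v \in \tilde V),
\]
and a density argument (again via weak compactness in $V$ and contractivity of $P$ on $H$) extends both $Pv \in V$ and this estimate to all $v \in V$. In particular, $v_k \to v$ in $V$ forces $Pv_k \rightharpoonup Pv$ in $V$.

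\emph{The pointwise inequality on a dense set.} This is the heart of the matter. Fix $v$ in a countable set $D \subset \tilde V$ that is dense in $V$. For a.e.\ $c \in [a,b)$ let $u$ be the solution on $[c,b]$ with $u(c)=v$, so $u \in C([c,b];V)$ and \eqref{eq:necessity} holds a.e.\ on $[c,b]$. I would pick $t_n \downarrow c$ in that full-measure set with $f(t_n) \to f(c)$ in $V'$; then $u(t_n) \to v$ in $V$, while Lemma~\ref{lem:invbound} again bounds $Pu(t_n)$ in $V$, so $Pu(t_n) \rightharpoonup Pv$ in $V$. Right-continuity of $\fra$ lets me replace $\fra(t_n,\cdot,\cdot)$ by $\fra(c,\cdot,\cdot)$ up to an error bounded by $\norm{\A(t_n)-\A(c)}_{\mathcal{L}(V,V')}$ times the bounded $V$-norms. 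Splitting $\fra(c,Pu(t_n),u(t_n)-Pu(t_n)) = \fra(c,Pu(t_n),u(t_n)) - \fra(c,Pu(t_n),Pu(t_n))$, the first term converges to $\fra(c,Pv,v)$ (boundedness of $\fra$ against the strongly convergent $u(t_n)$, and weak convergence of $Pu(t_n)$ against the fixed functional $\fra(c,\cdot,v)$), while $\liminf_n \Re\fra(c,Pu(t_n),Pu(t_n)) \ge \Re\fra(c,Pv,Pv)$ because $w \mapsto \Re\fra(c,w,w)+\omega\norm{w}_H^2$ is a nonnegative continuous quadratic form, hence weakly lower semicontinuous, and $\norm{Pu(t_n)}_H \to \norm{Pv}_H$. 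Therefore $\limsup_n \Re\fra(t_n,Pu(t_n),u(t_n)-Pu(t_n)) \le \Re\fra(c,Pv,v-Pv)$, whereas $\Re\langle f(t_n),u(t_n)-Pu(t_n)\rangle \to \Re\langle f(c),v-Pv\rangle$ (strong convergence of $f(t_n)$ paired with the weakly convergent $u(t_n)-Pu(t_n)$). Since \eqref{eq:necessity} gives $\Re\fra(t_n,Pu(t_n),u(t_n)-Pu(t_n)) \ge \Re\langle f(t_n),u(t_n)-Pu(t_n)\rangle$ for each $n$, taking $\liminf$ of the right side and comparing with the $\limsup$ of the left yields $\Re\fra(c,Pv,v-Pv) \ge \Re\langle f(c),v-Pv\rangle$. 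Intersecting the full-measure sets over the countable family $D$ produces a single full-measure set of times $c$ on which this holds for every $v \in D$.

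\emph{Density in $v$ and the main obstacle.} For such $c$ I would finally pass from $D$ to all of $V$: given $v \in V$, pick $v_k \in D$ with $v_k \to v$ in $V$, so $Pv_k \rightharpoonup Pv$ in $V$. The same weak-lsc computation (now with $\fra(c,\cdot,\cdot)$ fixed) gives $\limsup_k \Re\fra(c,Pv_k,v_k-Pv_k) \le \Re\fra(c,Pv,v-Pv)$, while $\Re\fra(c,Pv_k,v_k-Pv_k) \ge \Re\langle f(c),v_k-Pv_k\rangle \to \Re\langle f(c),v-Pv\rangle$; combining these establishes \eqref{eq:pointwiseest}. The main obstacle throughout is that one controls $Pu(t)$ and $Pv_k$ only \emph{weakly} in $V$, so joint continuity of the bilinear form $\fra$ fails; this is precisely circumvented by the one-sided weak lower semicontinuity of the coercive quadratic form, which is enough because the inequality to be proved is itself one-sided.
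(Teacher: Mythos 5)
Your proposal follows the same route as the paper's proof: sufficiency is delegated to Corollary~\ref{cor:invariance} applied on $[c,b]$; necessity is extracted from Theorem~\ref{thm:necInv} by starting a solution at $u(c)=v \in \tilde V$ at a time $c$ that is a Lebesgue point of $f$, choosing $t_n \downarrow c$ inside the full-measure set where \eqref{eq:necessity} holds and with $f(t_n) \to f(c)$, bounding $Pu(t_n)$ in $V$ via Lemma~\ref{lem:invbound}, identifying the weak $V$-limit of $Pu(t_n)$ with $Pv$ through the strong $H$-limit, and passing to the limit using right-continuity of $\fra$ together with weak lower semicontinuity of the coercive quadratic form $w \mapsto \Re\fra(c,w,w)+\omega\norm{w}_H^2$, before extending from $\tilde V$ to $V$ by approximation. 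Your splitting $\fra(c,Pu(t_n),u(t_n)-Pu(t_n)) = \fra(c,Pu(t_n),u(t_n)) - \fra(c,Pu(t_n),Pu(t_n))$ is a cosmetic variant of the paper's manoeuvre, which instead applies weak lower semicontinuity to $u(t_n)-Pu(t_n) \rightharpoonup u(c)-Pu(c)$ and then rearranges; both are sound.

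The one genuine defect is the reduction to a countable set $D \subset \tilde V$ dense in $V$ and the intersection of full-measure sets of times over $D$. Such a $D$ exists only if $V$ is separable, an assumption that neither the statement nor the paper makes (the paper is explicitly careful about non-separable $V$; see the remark in Section~\ref{sec:prelim} on the associated operator), so as written your argument proves the corollary only in the separable case. Moreover, the detour is unnecessary on your own terms: the only restriction your argument places on the time $c$ is that it be a Lebesgue point of $f$. The nullset produced by Theorem~\ref{thm:necInv} for the solution $u$ (which does depend on $v$ and $c$) constrains only the choice of the sequence $t_n \downarrow c$, not the admissible set of $c$'s, because at a Lebesgue point $c$ one can find, inside \emph{any} full-measure subset $S$ of $(c,b]$, a sequence $t_n \in S$, $t_n \downarrow c$, with $f(t_n) \to f(c)$. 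Hence the good set of times, namely $[a,b)$ minus the non-Lebesgue points of $f$, is independent of $v$, and the argument runs simultaneously for all $v \in \tilde V$, with an arbitrary approximating sequence $(v_k) \subset \tilde V$ in the final density step. This is exactly how the paper organizes its proof with the single nullset $N_1$; with that repair, your proof coincides with it.
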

For example, if $f \in L^2(I;H)$ and $\fra$ is symmetric and of bounded variation, then for every $c \in I$ every solution $u \in \MRs([c,b])$ of $u'+ \A u = f$, $u_c \in V$ is in $C([c,b]; V)$ (see \cite{Die15}). Another example is the situation where $\fra$ is autonomous, i.e. $\fra(\cdot,v,w)$ is constant for every $v,w\in V$ and $f=0$. Then every solution of $u'+\A u = 0$, $u(a) \in D(A)$ is in $C(I;D(A))$, where $D(A) := \{v\in V: \A v \in H\}$, $\norm{v}_{D(A)}^2 = \norm{\A v}_H^2 + \norm{v}_H^2$ is densely embedded in $V$. Thus
we recover the Beurling--Deny--Ouhabaz criterion.

Recall that in Corollary~\ref{cor:pointwiseest} condition \eqref{eq:pointwiseest} is sufficient even if the additional regularity assumptions are not satisfied.
The author does not know whether the other implication is true without these assumptions.
\begin{proof}[Proof of Corollary~\ref{cor:pointwiseest}]
	Let $N_1 \subset I$ be a nullset such that $I \setminus N_1$ are Lebesgue points of $f$.
	Let $c\in [a,b)\setminus N_1$, $u_c \in \tilde V$ and $u \in \MRs([c,b])$ be the solution of $u'+ \A u = f$, $u(c)=u_c$.
	By Theorem~\ref{thm:necInv} we obtain that $Pu \in L^2([c,b];V)$ and that there exists a nullset $N_2 \subset [c,b]$ such that
	\begin{equation}\label{eq:pointwNec}
		\Re \fra(t, Pu(t), u(t)-Pu(t)) \ge  \Re \langle{f(t)}, {u(t)-Pu(t)}\rangle
	\end{equation}
	for $t \in [c,b]\setminus N_2$.
	Let $(t_n)_{n\in\N} \subset (c,b] \setminus N_2$ be a sequence such that $t_n \downarrow c$ and $f(t_n) \to f(c)$ in $V'$ for $n\to\infty$.
	Note that such a sequence exists by Lebesgue's differentiation theorem.
	By Lemma~\ref{lem:invbound} we obtain that $(Pu(t_n))_{n \in \N}$ is bounded in $V$, thus we conclude that
	that $Pu(c)\in V$ and $Pu(t_n) \rightharpoonup Pu(c)$ in $V$.
	Thus
	\begin{multline*}
		\Re \fra(c,u(c)-Pu(c), u(c)-Pu(c))\\
			\le \limsup_{n\to\infty} \Re \fra(t_n,u(t_n)-Pu(t_n), u(t_n)-Pu(t_n))\\
				\le \limsup_{n\to\infty} \Re \langle \A(t_n) u(t_n) - f(t_n), u(t_n)-Pu(t_n) \rangle\\
					=  \Re \langle \A(c) u(c) - f(c), u(c)-Pu(c) \rangle,
	\end{multline*}
	where we used that $\fra$ is right-continuous and $v \mapsto \Re\fra(c, v, v)+\omega \norm v_H^2$ is an equivalent norm on $V$ for the first inequality
	and \eqref{eq:pointwNec} with $t=t_n$ for the second inequality.
	This shows  $P\tilde V \subset  V$ and
	\begin{equation*}
		\Re \fra(t, Pv, v-Pv) \ge  \Re \langle{f(t)}, {v-Pv}\rangle \quad (t\in [a,b) \setminus N_1,\, v \in \tilde V).
	\end{equation*}
	Finally let $v \in V$ and $(v_n)_{n\in\N} \subset \tilde V$, $v_n \to v$ in $V$. 
	With a similar argument as above (where we replace the role of $u(t_n)$ by $v_n$ and $u(c)$ by $v$) we obtain the assertion of the corollary.
\end{proof}
We finish this section with the proof of the theorem.
\begin{proof}[Proof of Theorem~\ref{thm:necInv}]
	For $n \in \N$ and $k \in \{0,1, \dots, n\}$, let $t_{k}^n := a+ \frac{k}{n(b-a)}$.
	Let $v_{n,k} \in \MRs([t_{k-1}^n, t_{k}^n])$ 
	be the solution of $v_{n,k}'+ \A v_{n,k} = f$, $v_{n,k}(t_{k-1}^n)= Pu(t_{k-1}^n)$ and $v_n \in L^2(I; V)$,
	$v_n(t) := v_{n,k}(t)$ for $t \in [t_{k-1}^n, t_{k}^n)$, $k \in \{1, \dots, n \}$. 
	Let $k \in \{1,\dots,n\}$. Since $(\fra, f)$ is $\Conv$ invariant we have that $v_{n,k}(t) \in \Conv$ for all $t \in  [t_{k-1}^n,t_k^n]$. 
	Thus $\norm{u(t)-Pu(t)}_H \le \norm{u(t)-v_{n,k}(t)}_H$ for all $t \in [t_{k-1}^n,t_k^n]$.
	We set $\tilde u := u-Pu$, $\tilde v_{n,k} := u-v_{n,k}$ and $\tilde v_n := u-v_n$ and obtain
	by Lemma~\ref{lem:productrule}
	\begin{multline}\label{eq:boundforv_n}
		\norm{\tilde u(b)}^2_H-\norm{\tilde u(a)}^2_H = \sum_{k=1}^n \left(\norm{\tilde u(t_{k}^n)}^2_H-\norm{\tilde u(t_{k-1}^n)}^2_H\right)\\
		\le \sum_{k=1}^n \left(\norm{\tilde v_{n,k}(t_{k}^n)}^2_H-\norm{\tilde v_{n,k}(t_{k-1}^n)}^2_H\right)
		= \sum_{k=1}^n 2 \Re \int_{t_{k-1}^n}^{t_{k}^n} \langle \tilde v_{n,k}', \tilde v_{n,k} \rangle \ \mathrm{d}{s}\\
		= -\int_a^b 2\Re\fra(s, \tilde v_n, \tilde v_n) \ \mathrm{d}{s}.
	\end{multline}
	Suppose at first that $\tilde v_n \to \tilde u$ in $L^2(I; H)$.
	From \eqref{eq:boundforv_n} and $H$-ellipticity of $\fra$ we deduce that $\tilde u \in L^2(I; V)$, $\tilde v_n \rightharpoonup \tilde u$ in $L^2(I; V)$ and
	\begin{equation}\label{eq:boundforu}
		\norm{\tilde u(b)}^2_H-\norm{\tilde u(a)}^2_H \le
		 -\int_a^b 2\Re\fra(s, \tilde u, \tilde u) \ \mathrm{d}{s}.
	\end{equation}
	By \eqref{eq:boundforu} and Lemma~\ref{lem:invariance} we obtain
	\begin{multline*}
		-\int_a^b 2\Re\fra(s, \tilde u, \tilde u) \ \mathrm{d}{s} \ge \norm{\tilde u(b)}^2_H-\norm{\tilde u(a)}^2_H \\
			= 2 \int_a^b \Re \langle{u'}, {\tilde u}\rangle \ \mathrm{d} s 
			= 2 \int_a^b \Re \langle{f-\A u}, {\tilde u}\rangle \ \mathrm{d} s.
	\end{multline*}
	Hence
	\[
	 	0 \ge\int_a^b \Re \langle{f-\A P u}, {\tilde u}\rangle \ \mathrm{d} s.
	\]
	Note that this inequality holds also if we integrate over any interval $J \subset I$ instead of $I$ with a simple modification of the argument above.
	Applying Lebesgue's differentiation Theorem this finishes the proof if $\tilde v_n \to \tilde u$ in $L^2(I;H)$. We have
	\begin{multline*}
		\int_{t_{k-1}^n}^{t_{k}^n}\norm{\tilde v_n- \tilde u}^2_{H} \ \mathrm{d}{s}\\
			\le 3\int_{t_{k-1}^n}^{t_{k}^n}\norm{\tilde v_{n,k}- \tilde v_{n,k}(t_{k-1}^n)}^2_{H} \ \mathrm{d}{s} 
				+ 6\int_{t_{k-1}^n}^{t_{k}^n}\norm{u(t_{k-1}^n)- u}^2_{H} \ \mathrm{d}{s}\\
			\le \frac {3C} {n(b-a)} \left( \norm{\tilde v_{n,k}(t_{k-1}^n)}^2_{H} - \norm{\tilde v_{n,k}(t_{k}^n)}^2_{H} +\norm{\tilde v_{n,k}}_{L^2(t_{k-1}^n,t_{k}^n;H)}^2 \right)\\
				+\frac {6C}  {n(b-a)} \left( \norm{u(t_{k-1}^n)}^2_{H} - \norm{u(t_{k}^n)}^2_{H} + \norm{u}_{L^2(t_{k-1}^n,t_{k}^n;H)}^2 
					+ \norm{f}_{L^2(t_{k-1}^n,t_{k}^n;V')}^2 \right)
	\end{multline*}
	where we use that $Pu(t_{k-1}^n) = v_{n,k}(t_{k-1}^n)$ and that $P$ is a contraction in the first estimate and Lemma~\ref{lem:L^2convergency} below in the second estimate.
	We take the sum over $k$ from $1$ to $n$ and obtain by the first estimate of \eqref{eq:boundforv_n}
	\begin{multline*}
		\int_a^b\norm{\tilde v_n- \tilde u}^2_{H} \ \mathrm{d}{s} 
			\le\frac {3 C}  {n(b-a)} \Big( \norm{\tilde u(a)}^2_H-\norm{\tilde u(b)}^2_H +\norm{\tilde v_n}_{L^2(I;H)}^2\\
				 + 2\norm{u(a)}^2_H-2\norm{u(b)}^2_H + 2\norm{u}_{L^2(I;H)}^2 +2\norm{f}_{L^2(I;V')}   \Big).
	\end{multline*}
	By the reverse triangle inequality it follows that $\norm{\tilde v_n}_{L^2(I;H)}^2$ is bounded. Thus $\tilde v_n \to \tilde u$ in $L^2(I;H)$.
\end{proof}
\begin{lemma}\label{lem:L^2convergency}
	Let $u \in \MRs(I)$. Then there exists a constant $C>0$ such that
	\[
		\norm{u(t)-u(a)}^2_H \le C \left( \norm{u(a)}_H^2 - \norm{u(b)}_H^2 + \norm{u}_{L^2(I;H)}^2+ \norm{f}_{L^2(I;V')}^2 \right)
	\]
	for all $t \in I$, where $f:= u'+ \A u$.
\end{lemma}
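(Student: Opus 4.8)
The plan is to control everything by the right-hand side
\[
  R := \norm{u(a)}_H^2 - \norm{u(b)}_H^2 + \norm{u}_{L^2(I;H)}^2 + \norm{f}_{L^2(I;V')}^2 ,
\]
first deriving an a priori estimate on $\norm{u}_{L^2(I;V)}$ and then controlling the $H$-modulus of continuity of $u$ by testing the equation against differences $u-u(\sigma)$ for a cleverly chosen base point $\sigma$.

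First I would prove the energy estimate $\norm{u}_{L^2(I;V)}^2 \le C_1 R$. Applying Lemma~\ref{lem:productrule} on $[a,b]$ and substituting $u'=f-\A u$ gives $\norm{u(a)}_H^2 - \norm{u(b)}_H^2 = 2\int_a^b \Re\fra(s,u,u)\,ds - 2\int_a^b \Re\langle f,u\rangle\,ds$. Bounding the form integral from below by $\alpha\norm{u}_{L^2(I;V)}^2 - \omega\norm{u}_{L^2(I;H)}^2$ via $H$-ellipticity \eqref{eq:qcoercive} and using Young's inequality on the term containing $f$, I solve for $\norm{u}_{L^2(I;V)}^2$. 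Since $u'=f-\A u$ and $\fra$ is bounded \eqref{eq:Vbounded}, this also gives $\norm{u'}_{L^2(I;V')}^2 \le C_2 R$.

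The mechanism for the modulus of continuity is the following. For any $\sigma\in I$ with $u(\sigma)\in V$ (hence for a.e.\ $\sigma$, since $u\in L^2(I;V)$) the function $u-u(\sigma)$ lies in $\MRs(I)$, so Lemma~\ref{lem:productrule} applies to it and yields
\[
  \norm{u(\tau)-u(\sigma)}_H^2 = 2\int_\sigma^\tau \Re\langle u'(r), u(r)-u(\sigma)\rangle\,dr \qquad (\tau\ge\sigma),
\]
together with the symmetric identity in which $\sigma$ plays the role of the upper endpoint. Cauchy--Schwarz bounds the right-hand side by $2\norm{u'}_{L^2(\sigma,\tau;V')}\big(\int_\sigma^\tau\norm{u(r)-u(\sigma)}_V^2\,dr\big)^{1/2}$, and $\norm{u(r)-u(\sigma)}_V^2\le 2\norm{u(r)}_V^2+2\norm{u(\sigma)}_V^2$ produces the controlled quantity $\norm{u}_{L^2(I;V)}^2$ plus a pointwise term proportional to $(\tau-\sigma)\norm{u(\sigma)}_V^2$. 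The hard part is precisely this pointwise term: the base point I actually need is $\sigma=a$, where $u(a)\in H$ but typically $u(a)\notin V$, and $\norm{u(\sigma)}_V^2$ is only integrable, blowing up as $\sigma\downarrow a$, so the limit $\sigma\to a$ is unavailable.

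To overcome this I would not base the estimate at $a$ but at a mean-value point. For fixed $t$, choose $\sigma^*\in(a,t)$ with $u(\sigma^*)\in V$ and $\norm{u(\sigma^*)}_V^2\le \tfrac1{t-a}\norm{u}_{L^2(a,t;V)}^2\le \tfrac{C_1R}{t-a}$; such a point exists because the middle quantity is the average of $\norm{u(\cdot)}_V^2$ over $[a,t]$ and $u(\sigma)\in V$ for a.e.\ $\sigma$. Then I split
\[
  \norm{u(t)-u(a)}_H^2 \le 2\norm{u(t)-u(\sigma^*)}_H^2 + 2\norm{u(\sigma^*)-u(a)}_H^2 ,
\]
estimating the first term by the identity above with base point $\sigma^*$ on $[\sigma^*,t]$ and the second by the symmetric identity with apex $\sigma^*$ on $[a,\sigma^*]$. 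In each case the dangerous contribution is $(\text{interval length})\cdot\norm{u(\sigma^*)}_V^2\le (t-a)\cdot\tfrac{C_1R}{t-a}=C_1R$, while the remaining factors are controlled by $\norm{u'}_{L^2(I;V')}^2$ and $\norm{u}_{L^2(I;V)}^2$, both $\le CR$. This yields $\norm{u(t)-u(a)}_H^2\le CR$ for every $t\in I$ (the case $t=a$ being trivial), with $C$ depending only on $M$, $\alpha$, $\omega$; in particular $C$ is uniform over subintervals, which is exactly what the telescoping in the proof of Theorem~\ref{thm:necInv} requires.
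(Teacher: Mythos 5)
Your argument is correct and its first half coincides with the paper's: the a priori bounds on $\norm{u}_{L^2(I;V)}^2$ and $\norm{u'}_{L^2(I;V')}^2$ are obtained exactly as there (Lemma~\ref{lem:productrule}, $H$-ellipticity, Young's inequality, then $u'=f-\A u$ together with boundedness of $\fra$). The genuine difference is the second half, i.e.\ how one converts control of $\norm{u}_{\MRs(I)}$ into the pointwise bound on $\norm{u(t)-u(a)}_H^2$ despite the obstruction you correctly isolated, namely that $u(a)\notin V$ in general. You re-base the fundamental theorem of calculus at a mean-value point $\sigma^*\in(a,t)$ with $\norm{u(\sigma^*)}_V^2\le\frac1{t-a}\norm{u}_{L^2(a,t;V)}^2$ and then split by the triangle inequality, so that the dangerous term $(\text{interval length})\cdot\norm{u(\sigma^*)}_V^2$ is absorbed. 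The paper instead uses a folding trick: for fixed $t$ it sets $v(s):=u(\tfrac12(t+s))-u(a+\tfrac12(t-s))$, so that $v\in\MRs([a,t])$, $v(a)=0$ and $v(t)=u(t)-u(a)$; then Lemma~\ref{lem:productrule} applied to $v$ gives
\[
	\norm{u(t)-u(a)}_H^2=2\int_a^t\Re\langle v',v\rangle\,\mathrm{d}s
	\le 2\norm{v'}_{L^2(a,t;V')}\norm{v}_{L^2(a,t;V)}
	\le 2\norm{u}_{\MRs([a,t])}^2,
\]
the last step by a change of variables, so that no base point in $V$ is ever needed. The paper's device is shorter and yields the clean constant $2$; your route costs a measurable-selection argument and a larger numerical constant, but it is more elementary (no reparametrization of time), and, like the paper's, it produces a constant depending only on $M$, $\alpha$, $\omega$ and not on the interval or on $u$ --- you were right to single this uniformity out as what the telescoping in Theorem~\ref{thm:necInv} requires.

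One caveat, which applies to your write-up and to the paper's proof in equal measure: since the telescoping term $\norm{u(a)}_H^2-\norm{u(b)}_H^2$ may be negative, what the three estimates literally yield is a bound of the form $C_1\bigl(\norm{u(a)}_H^2-\norm{u(b)}_H^2\bigr)+C_2\norm{u}_{L^2(I;H)}^2+C_3\norm{f}_{L^2(I;V')}^2$ with \emph{different} constants, the one in front of $\norm{u}_{L^2(I;H)}^2$ carrying a factor of order $\omega$. This cannot in general be collapsed into a single constant times the stated bracket: for $\fra(v,w)=-vw$ on $V=H=\R$ (so $M=1$, $\alpha=1$, $\omega=2$) and $f=0$, the solution $u(t)=e^t$ on $[0,1]$ makes the bracket negative while the left-hand side is positive. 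So strictly speaking both proofs establish the three-constant version of the lemma --- which is also all that the proof of Theorem~\ref{thm:necInv} uses, since after summing over the partition each of the three terms telescopes or adds up separately.
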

\begin{proof}
	Let $t \in (a,b]$. We set $v(s) := u(\tfrac {1}{2}(t+s))-u(a+\tfrac {1}{2}(t-s))$.
	Then $v(a)=0$, $v(t)=u(t)-u(a)$ and $v \in \MRs([a,t])$.
	Thus
	\begin{multline*}
		\norm{u(t)-u(a)}^2_H = \norm{v(t)}^2_H-\norm{v(a)}^2_H = 2 \int_a^t \Re \langle v', v\rangle \ \mathrm{d}{s}\\
			\le \norm{v}_{\MRs([a,t])}^2 \le 2 \norm{u}_{\MRs([a,t])}^2\le 2 \norm{u}_{\MRs(I)}^2.
	\end{multline*}
	Moreover,
	\begin{multline*}
		\norm{u(b)}^2_H-\norm{u(a)}^2_H = 2 \int_a^b \Re \langle u', u\rangle \ \mathrm{d}{s}
			= 2 \int_a^b \Re \langle f-\A u, u\rangle \ \mathrm{d}{s}\\
		\le - 2\alpha \norm{u}_{L^2(I;V)}^2 + 2\omega \norm{u}_{L^2(I;H)}^2 + 2\norm{f}_{L^2(I;V')}\norm{u}_{L^2(I;V)}\\
		\le - \alpha \norm{u}_{L^2(I;V)}^2 + \omega \norm{u}_{L^2(I;H)}^2 + \frac 1 \alpha \norm{f}_{L^2(I;V')}^2
	\end{multline*}
	and
	\[
		\norm{u'}_{L^2(I;V')} = \norm{f-\A u}_{L^2(I;V')} \le \norm{f}_{L^2(I;V')} + M \norm{u}_{L^2(I;V)}.
	\]
	Now the claim follows by the three estimates above.
\end{proof}

%%%%%%%%%%%%%%%%%%%%%%%%%%%%%%%%%%%%%%%%%%%%%%%%%%%%%%%%%%%%%%%%%%%%%%%%%%%%%%%%%%%%%%%%%%%%%%%%%%%
\section{A semilinear problem}

In Section~\ref{sec:semilinnec} we want to study a semilinear version of the necessity conditions for invariance given in Section~\ref{sec:nec}.
Before that we want to establish well-posedness at least in a simple case.

Let $I:=[a,b]$ where $-\infty < a < b <\infty$ and let $V, H$ be Hilbert spaces over the field $\K$ such that $V \overset d \hookrightarrow H$. 
Let $\fra \in \Form(I;V,H)$, $\A \sim \fra$.
Suppose $F \colon I \times H \to V'$ satisfies $F(\cdot, v) \in L^2(I;V')$ for every $v \in H$ and there exists a constant $L>0$ such that
\begin{equation*}
	\norm{F(t,v)- F(t,w)}_{V'} \le L \norm{v-w}_H \quad (t\in I,\, v,w \in H).
\end{equation*}
\begin{proposition}\label{prop:semilinearsolution}
	For every $u_a \in H$ there exists a unique $u \in \MRs(I)$ such that $u'+\A u = F(\cdot, u)$, $u(a)=u_a$.
\end{proposition}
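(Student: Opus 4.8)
The plan is to recast the equation as a fixed-point problem and invoke Banach's contraction principle, using Lions' theorem for the linear building block. For $w \in L^2(I;H)$ I define $\Phi w \in \MRs(I)$ to be the unique solution of the \emph{linear} problem $u' + \A u = F(\cdot, w)$, $u(a) = u_a$, which exists by Theorem~\ref{thm:Lions}. A function $u \in \MRs(I)$ solves the semilinear problem precisely when it is a fixed point of $\Phi$; since $\MRs(I) \hookrightarrow C(I;H) \hookrightarrow L^2(I;H)$, it suffices to produce a unique fixed point of $\Phi$ in $L^2(I;H)$ and to observe that the range of $\Phi$ lies in $\MRs(I)$.

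Before this can work I must check that $\Phi$ is well defined, i.e.\ that $t \mapsto F(t, w(t))$ lies in $L^2(I;V')$ for every $w \in L^2(I;H)$. The hypotheses state that $F(\cdot, v)$ is measurable for each fixed $v$ and that $F(t, \cdot)$ is Lipschitz, hence continuous; thus $F$ is a Carath\'eodory map, and approximating $w$ by simple functions shows that $t \mapsto F(t, w(t))$ is strongly measurable. The Lipschitz bound $\norm{F(t, w(t))}_{V'} \le \norm{F(t,0)}_{V'} + L\norm{w(t)}_H$ together with $F(\cdot, 0) \in L^2(I;V')$ then yields square integrability, so Theorem~\ref{thm:Lions} applies.

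The heart of the argument is the contraction estimate. Given $w_1, w_2 \in L^2(I;H)$, set $z := \Phi w_1 - \Phi w_2$ and $g := F(\cdot, w_1) - F(\cdot, w_2)$, so that $z' + \A z = g$ with $z(a) = 0$. Applying Lemma~\ref{lem:productrule}, the $H$-ellipticity \eqref{eq:qcoercive} and Young's inequality to absorb the $\norm{z}_V^2$ term, I obtain the differential inequality $(\norm{z}_H^2)'(t) \le \tfrac{1}{2\alpha}\norm{g(t)}_{V'}^2 + 2\omega\norm{z(t)}_H^2$, and the Lipschitz hypothesis turns the first term into $\tfrac{L^2}{2\alpha}\norm{w_1(t)-w_2(t)}_H^2$. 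Integrating from $a$ with $z(a)=0$ and comparing with the variation-of-constants formula gives the pointwise bound $\norm{z(t)}_H^2 \le \tfrac{L^2}{2\alpha}\int_a^t e^{2\omega(t-s)}\norm{w_1(s)-w_2(s)}_H^2\,\mathrm{d}s$.

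From here I would promote this to a genuine contraction by equipping $L^2(I;H)$ with the equivalent weighted norm $\norm{w}_\mu^2 := \int_a^b e^{-2\mu(t-a)}\norm{w(t)}_H^2\,\mathrm{d}t$. Inserting the pointwise bound and interchanging the order of integration produces $\norm{z}_\mu^2 \le \tfrac{L^2}{4\alpha(\mu-\omega)}\norm{w_1 - w_2}_\mu^2$ for every $\mu > \omega$, so choosing $\mu$ large makes the constant strictly less than $1$. Banach's fixed point theorem then supplies a unique $u$, which lies in $\MRs(I)$ by construction, and uniqueness in $\MRs(I)$ follows since every $\MRs(I)$-solution is a fixed point of $\Phi$. (Alternatively one can avoid the weighted norm by running the contraction on a short subinterval $[a, a+\tau]$ with $\tau c_\fra L^2 < 1$, exploiting that $z(a)=0$ forces $\norm{z}_{C([a,a+\tau];H)} \le \norm{z}_{\MRs([a,a+\tau])}$, and then concatenating finitely many such pieces.) I expect the only genuinely delicate point to be the measurability of $t \mapsto F(t,w(t))$ in the possibly non-separable space $V'$; the remainder is the classical Picard--Lindel\"of scheme transplanted into the Lions setting.
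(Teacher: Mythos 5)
Your proposal is correct, but it is organized differently from the paper's proof. Both arguments are Picard iterations built on Lions' theorem for the linear problem, yet the contraction is set up in different spaces: the paper runs the fixed-point map $S$ in the solution space $\MRs([a,b])$ itself, using the maximal regularity bound \eqref{eq:mrbound} with constant $c_\fra$ together with the embedding estimate of Lemma~\ref{lem:smallembedding} (which requires the difference to vanish at $a$); this only gives a contraction on a short interval of length less than $2\sqrt2/(c_\fra L^2)$, so the paper must then glue finitely many local solutions (Lemma~\ref{lem:glue}) and run a separate connectedness-type argument for uniqueness (part c of its proof). You instead contract in $L^2(I;H)$ with an exponentially weighted (Bielecki) norm: the energy estimate obtained from Lemma~\ref{lem:productrule}, $H$-ellipticity and Young's inequality gives the pointwise bound $\norm{z(t)}_H^2 \le \tfrac{L^2}{2\alpha}\int_a^t e^{2\omega(t-s)}\norm{w_1(s)-w_2(s)}_H^2\,\mathrm{d}s$, and the weight $e^{-2\mu(t-a)}$ with $\mu>\omega$ large converts this into a global contraction with constant $L^2/(4\alpha(\mu-\omega))$; your computation of that constant is right. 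What your route buys: existence and uniqueness on all of $I$ in a single application of Banach's theorem, no gluing lemma, no separate uniqueness step, and no dependence on the quantitative constant $c_\fra$ from Theorem~\ref{thm:Lions} --- only the qualitative solvability of the linear problem plus the energy estimate. You are also more careful than the paper on a point it leaves implicit, namely the strong measurability of $t\mapsto F(t,w(t))$ via the Carath\'eodory/simple-function argument, which indeed is the right way to handle possibly non-separable $V'$. What the paper's route buys: the iteration lives directly in $\MRs(I)$, so the fixed point is manifestly a maximal-regularity solution without the (small, and correctly handled by you) final step of identifying an $L^2(I;H)$ fixed point with an element of $\MRs(I)$.
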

Before we prove Proposition~\ref{prop:semilinearsolution} we need several lemmas.
We denote by $M \ge 0$, $\alpha >0$ and $\omega \in \R$ the continuity and ellipticity constants in
\eqref{eq:Vbounded} and \eqref{eq:qcoercive}.

\begin{lemma}\label{lem:smallembedding}
	Let $u \in \MRs(I)$ with $u(a)=0$. Then
	\[
		\norm{u}^2_{L^2(I;H)} \le \frac {b-a}{2\sqrt2} \norm{u}_{\MRs(I)}^2.
	\]
\end{lemma}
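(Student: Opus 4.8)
The plan is to resist estimating $\norm{u}_{L^2(I;H)}$ directly via the product rule of Lemma~\ref{lem:productrule}. That naive route bounds $\norm{u(t)}_H^2 = 2\int_a^t \Re\langle u',u\rangle\,\mathrm{d}s \le 2\norm{u'}_{L^2(I;V')}\norm{u}_{L^2(I;V)}$ pointwise in $t$, and after integrating over $I$ and applying Young's inequality only yields the factor $(b-a)$ in front of $\norm{u}_{\MRs(I)}^2$, rather than the sharper $\tfrac{b-a}{2\sqrt2}$. Instead I would work through the antidual $V'$ and exploit the Gelfand triple structure $V \overset d \hookrightarrow H \overset d \hookrightarrow V'$.

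The first ingredient is the interpolation inequality $\norm{v}_H^2 \le \norm{v}_{V'}\,\norm{v}_V$ for $v \in V$, which is immediate from $\norm{v}_H^2 = (v \mid v)_H = \langle v, v\rangle \le \norm{v}_{V'}\,\norm{v}_V$, where on the left of the pairing $v$ is regarded as an element of $V'$ through $H \overset d \hookrightarrow V'$. Applying this at each $t$, then Cauchy--Schwarz in $L^2(I)$, gives
\[
    \norm{u}_{L^2(I;H)}^2 \le \int_a^b \norm{u(t)}_{V'}\,\norm{u(t)}_V \,\mathrm{d}t \le \norm{u}_{L^2(I;V')}\,\norm{u}_{L^2(I;V)}.
\]
The second ingredient converts the initial condition $u(a)=0$ into a bound on the $V'$-norm. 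Since $u \in H^1(I;V')$ is absolutely continuous into $V'$ with $u(t) = \int_a^t u'(s)\,\mathrm{d}s$, Cauchy--Schwarz gives $\norm{u(t)}_{V'} \le \sqrt{t-a}\,\norm{u'}_{L^2(I;V')}$; squaring, integrating, and using $\int_a^b (t-a)\,\mathrm{d}t = \tfrac{(b-a)^2}{2}$ produces $\norm{u}_{L^2(I;V')} \le \tfrac{b-a}{\sqrt2}\,\norm{u'}_{L^2(I;V')}$. Substituting into the display and finishing with Young's inequality $xy \le \tfrac12(x^2+y^2)$ for $x = \norm{u'}_{L^2(I;V')}$ and $y = \norm{u}_{L^2(I;V)}$ gives exactly $\tfrac{b-a}{2\sqrt2}\norm{u}_{\MRs(I)}^2$.

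The only genuinely non-routine point is the decision to pass to $V'$: the pointwise product-rule estimate loses the constant because it couples $\norm{u'}_{V'}$ and $\norm{u}_V$ prematurely at every $t$. Routing the initial condition through $\norm{u}_{L^2(I;V')}$ and only then interpolating against $\norm{u}_{L^2(I;V)}$ is what recovers the factor $\tfrac{1}{2\sqrt2}$; the remaining steps are standard applications of Cauchy--Schwarz and Young's inequality.
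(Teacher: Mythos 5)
Your proof is correct and is essentially identical to the paper's: both bound $\norm{u}_{L^2(I;V')}$ by $\tfrac{b-a}{\sqrt2}\norm{u'}_{L^2(I;V')}$ via $u(t)=\int_a^t u'(s)\,\mathrm{d}s$ and Cauchy--Schwarz, combine this with the interpolation estimate $\norm{u}^2_{L^2(I;H)} \le \norm{u}_{L^2(I;V')}\norm{u}_{L^2(I;V)}$, and finish with Young's inequality. The only difference is the order in which the two ingredients are presented, which is immaterial.
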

\begin{proof}
	Since $u(a)=0$ we have
	\begin{multline*}
		\norm{u}^2_{L^2(I;V')} = \int_a^b \left\lVert\int_a^t u'(s) \ \mathrm{d}{s} \right\rVert^2_{V'} \ \mathrm{d}{t} \le  \int_a^b  \int_a^t  (t-a) \norm{u'(s)}^2_{V'} \ \mathrm{d}{s} \ \mathrm{d}{t}\\
			\le  \frac{(b-a)^2}2\int_a^b  \norm{u'(s)}^2_{V'} \ \mathrm{d}{s} = \frac{(b-a)^2}2\norm{u'}^2_{L^2(I;V')}.
	\end{multline*}
	Thus
	\begin{multline*}
		\norm{u}^2_{L^2(I;H)} \le \norm{u}_{L^2(I;V')} \norm{u}_{L^2(I;V)}\\
			 \le \frac{b-a}{\sqrt 2} \norm{u'}_{L^2(I;V')}\norm{u}_{L^2(I;V)} \le \frac{b-a}{2\sqrt 2} \norm{u}_{\MRs(I)}^2. \tag*\qedhere
	\end{multline*}
\end{proof}
\begin{lemma}\label{lem:glue}
	Let $-\infty <a<c<b<\infty$.
	Let $u_1 \in \MRs([a,c])$, $u_2 \in \MRs([c,b])$ such that $u_1(c)=u_2(c)$.
	Set $u(t):= u_1(t)$ for $t \in [a,c]$ and $u(t):= u_2(t)$ for $t \in (c,b]$.
	Then $u \in \MRs([a,b])$.
\end{lemma}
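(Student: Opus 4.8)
The plan is to verify the two defining conditions of $\MRs([a,b]) = H^1([a,b];V') \cap L^2([a,b];V)$ separately, the first being routine and the second carrying the real content. Membership in $L^2([a,b];V)$ is immediate: the integral splits as $\int_a^b \norm{u}_V^2 = \int_a^c \norm{u_1}_V^2 + \int_c^b \norm{u_2}_V^2$, and both terms are finite because $u_1 \in L^2([a,c];V)$ and $u_2 \in L^2([c,b];V)$. It therefore remains to show $u \in H^1([a,b];V')$, which I would do by exhibiting an explicit candidate for the weak derivative and checking the defining integration-by-parts identity against scalar test functions.

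Define $w \in L^2([a,b];V')$ by $w := u_1'$ on $[a,c]$ and $w := u_2'$ on $(c,b]$; this lies in $L^2([a,b];V')$ since $\int_a^b \norm{w}_{V'}^2 = \int_a^c \norm{u_1'}_{V'}^2 + \int_c^b \norm{u_2'}_{V'}^2 < \infty$. I claim $w$ is the weak derivative of $u$, i.e.\ $\int_a^b u\,\varphi' = -\int_a^b w\,\varphi$ in $V'$ for every $\varphi \in C_c^\infty(a,b)$. To see this I split each integral at $c$ and integrate by parts on $[a,c]$ and on $[c,b]$, using that $u_i$ lies in $H^1$ on the respective subinterval. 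The boundary contributions at $a$ and $b$ vanish because $\varphi(a)=\varphi(b)=0$, while the two contributions at the interior node $c$ combine to $(u_1(c)-u_2(c))\varphi(c)$. This is precisely where the hypothesis $u_1(c)=u_2(c)$ enters: it forces the interior boundary term to cancel, leaving exactly $-\int_a^b w\,\varphi$. Hence $u' = w \in L^2([a,b];V')$ and, together with the first step, $u \in \MRs([a,b])$.

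The step requiring care is the integration-by-parts formula for $V'$-valued functions and the meaning of the point values $u_i(c)$. Here I would invoke the embedding $\MRs(I) \hookrightarrow C(I;H) \hookrightarrow C(I;V')$ recorded in Section~\ref{sec:prelim}: it provides well-defined traces $u_1(c),u_2(c) \in H$, makes the hypothesis $u_1(c)=u_2(c)$ a genuine equality of point values, and, via the fundamental theorem of calculus $u_i(t)-u_i(s)=\int_s^t u_i'$ underlying Lemma~\ref{lem:productrule}, legitimizes the integration by parts on each subinterval. I expect this to be the only delicate point; everything else is bookkeeping. As an alternative that avoids quoting a vector-valued integration-by-parts lemma, one could prove the identity directly from the representation $u_i(t)=u_i(c)+\int_c^t u_i'$ together with Fubini's theorem, but the route above is shorter.
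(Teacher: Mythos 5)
Your proof is correct. In fact, the paper states this lemma \emph{without any proof} (the author evidently regards the gluing property as standard), so there is nothing to compare against; your argument --- splitting the $L^2(I;V)$ norm at $c$, exhibiting the glued derivative $w$ as a candidate weak derivative, and verifying $\int_a^b u\,\varphi' = -\int_a^b w\,\varphi$ by integrating by parts on each subinterval so that the interior boundary terms combine to $(u_1(c)-u_2(c))\varphi(c)$ and cancel by hypothesis --- is precisely the standard argument the author presumably had in mind. The one delicate point, namely that the traces $u_i(c)$ are well defined and that the subinterval integration by parts is legitimate for $V'$-valued functions, you dispose of correctly via $\MRs(I)\hookrightarrow C(I;H)\hookrightarrow C(I;V')$ together with the representation $u_i(t)=u_i(c)+\int_c^t u_i'$, which is all that is needed.
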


\begin{proof}[Proof of Proposition~\ref{prop:semilinearsolution}]
	a) By Theorem~\ref{thm:Lions} there exists a constant $c_\fra$ such that \eqref{eq:mrbound} holds.
	Let $b>a$ such that $(b-a)< q:=\frac{2\sqrt 2}{c_{\fra} L^2}$ and let $u_a \in H$.
	We define $S \colon \MRs([a,b]) \to \MRs([a,b])$, $v \mapsto u$,
	where $u$ is the solution of $u'+\A u =F(\cdot, v(\cdot))$, $u(a)=u_a$.
	For $u,v \in \MRs([a,b])$ we have by Lemma~\ref{lem:smallembedding}
	\begin{multline*}
		\norm{Su-Sv}_{\MRs([a,b])}^2 \le c_{\fra} \norm{F \circ u - F \circ v}_{L^2(a,b;V')}^2\\
			 \le c_{\fra} L^2 \norm{u - v}_{L^2(a,b;H)} \le c_{\fra} L^2 \frac{b-a}{2\sqrt 2} \norm{u - v}_{\MRs([a,b])}^2.
	\end{multline*}
	Thus $S$ is a strict contraction and by the Banach fixed-point theorem we obtain a unique $u \in \MRs([a,b])$ such that $Su=u$, i.e.\
	$u$ is the unique solution of $u'+\A u = F (\cdot, u(\cdot))$, $u(a)=u_a$.
	
	b) Part a) together with Lemma~\ref{lem:glue} yields a solution $u \in \MRs([a,b])$ for any $b>a$.
	
	c) We show uniqueness.
	Let $u_1, u_2 \in \MRs(I)$ be solutions with $u_1(a)=u_2(a)$.
	Recall that $u_1, u_2 \in C(I;H)$.
	Assume that $u_1$ and $u_2$ are different,
	then there exists $t_0 \in [a,b)$ such that $u_1=u_2$ on $[a,t_0]$ but
	$u_1(t_n) \neq u_2(t_n)$ for some $t_n \downarrow t_0$.
	Choose $0< \varepsilon < \min\{ q , T-t_0 \}$.
	Then there exist two different solutions on $[t_0, t_0 + \varepsilon]$ which contradicts a).
\end{proof}

%%%%%%%%%%%%%%%%%%%%%%%%%%%%%%%%%%%%%%%%%%%%%%%%%%%%%%%%%%%%%%%%%%%%%%%%%%%%%%%%%%%%%%%%%%%%%%%%%%%
\section{Semilinear Necessity}\label{sec:semilinnec}
In Section~\ref{sec:invariance} we saw that the invariance criterion for the non-homogeneous equation (Theorem~\ref{thm:inv}) could be applied immediately to semilinear problems.
The necessity result (Theorem~\ref{thm:necInv}) cannot so easily be carried over.
Additional arguments are needed to adapt the proofs of Section~\ref{sec:nec} to the semilinear case.
%Instead we have to modify the proofs of Section~\ref{sec:nec} to obtain a semilinear version.

Let $I:=[a,b]$ where $-\infty < a < b <\infty$ and let $V, H$ be Hilbert spaces over the field $\K$ such that $V \overset d \hookrightarrow H$. 
Let $\fra \in \Form(I;V,H)$, $\A \sim \fra$.
Let $F \colon I \times H \to V'$ be a function such that $F(\cdot, v) \in L^2(I;V')$ for every $v \in H$ and suppose that there exists a constant $L>0$ such that
\begin{equation}\label{eq:LipEst}
	\norm{F(t,v)- F(t,w)}_{V'} \le L \norm{v-w}_H \quad (t\in I,\, v,w \in H).
\end{equation}
Then by Proposition~\ref{prop:semilinearsolution},
for every $c\in [a,b)$ and every $u_c \in H$ there exists a unique $u \in \MRs([c,b])$ such that $u'+\A u = F(\cdot, u)$, $u(c)=u_c$.
	
Let $\Conv \subset H$ be a closed convex set and let $P\colon H \to \Conv$ be the orthogonal projection onto $\Conv$.
We say that $(\fra, F)$ is $\Conv$ \emph{invariant} if for every $c \in I$ and every $u \in \MRs([c,b])$ with $u'+\A u = F(\cdot, u)$, $u(c) \in \Conv$ 
we have $u(t) \in \Conv$ for all $t \in [c,b]$.

\begin{theorem}\label{thm:necInvSemi}
	Suppose $(\fra, F)$ is $\Conv$ invariant.
	Then for every $u \in \MRs(I)$ with $u'+\A u =F(\cdot, Pu)$ we have
	$Pu \in L^2(I;V)$ and
	\begin{equation*}
		 \Re \fra(t, Pu(t), u(t)-Pu(t)) \ge  \Re \langle{F(t,Pu)}, {u(t)-Pu(t)}\rangle \quad (\text{a.e.\ } t \in I).
	\end{equation*}
\end{theorem}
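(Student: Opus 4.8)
The plan is to adapt the proof of Theorem~\ref{thm:necInv} almost verbatim, replacing the linear subinterval problems by their semilinear counterparts and keeping track of the extra term that the nonlinearity produces. Fix $u \in \MRs(I)$ with $u'+\A u = F(\cdot, Pu)$ and set $\tilde u := u - Pu$. For $n \in \N$ and $k \in \{0,\dots,n\}$ put $t_k^n := a + \tfrac{k}{n}(b-a)$, and on each $[t_{k-1}^n,t_k^n]$ let $v_{n,k}$ be the semilinear solution of $v_{n,k}'+\A v_{n,k} = F(\cdot,v_{n,k})$ with $v_{n,k}(t_{k-1}^n)=Pu(t_{k-1}^n)\in\Conv$, which exists uniquely by Proposition~\ref{prop:semilinearsolution}. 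Since $(\fra,F)$ is $\Conv$ invariant, $v_{n,k}(t)\in\Conv$, so $\norm{\tilde u(t)}_H \le \norm{u(t)-v_{n,k}(t)}_H$ by the best-approximation property of $P$. Writing $\tilde v_{n,k}:=u-v_{n,k}$, $\tilde v_n:=u-v_n$ (with $v_n:=v_{n,k}$ on $[t_{k-1}^n,t_k^n)$) and using $\tilde v_{n,k}(t_{k-1}^n)=\tilde u(t_{k-1}^n)$ together with Lemma~\ref{lem:productrule} gives, exactly as in \eqref{eq:boundforv_n},
\begin{equation*}
    \norm{\tilde u(b)}_H^2 - \norm{\tilde u(a)}_H^2 \le \sum_{k=1}^n 2\Re\int_{t_{k-1}^n}^{t_k^n} \langle \tilde v_{n,k}', \tilde v_{n,k}\rangle \, \mathrm{d}s.
\end{equation*}

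The new feature appears when one expands $\tilde v_{n,k}' = [F(\cdot,Pu)-F(\cdot,v_{n,k})] - \A\tilde v_{n,k}$: beside the familiar form term $-2\int_a^b\Re\fra(s,\tilde v_n,\tilde v_n)\,\mathrm{d}s$ there is now the extra contribution
\begin{equation*}
    R_n := 2\Re\int_a^b \langle F(\cdot,Pu) - F(\cdot,v_n), \tilde v_n\rangle \, \mathrm{d}s.
\end{equation*}
Using \eqref{eq:LipEst} and $Pu-v_n = \tilde v_n - \tilde u$ we get $\abs{R_n}\le 2L\int_a^b(\norm{\tilde u}_H+\norm{\tilde v_n}_H)\norm{\tilde v_n}_V\,\mathrm{d}s$, which Young's inequality absorbs into the coercivity: combining with $H$-ellipticity yields
\begin{equation*}
    \tfrac{3\alpha}{2}\norm{\tilde v_n}_{L^2(I;V)}^2 \le \norm{\tilde u(a)}_H^2 - \norm{\tilde u(b)}_H^2 + C\big(\norm{\tilde v_n}_{L^2(I;H)}^2 + \norm{\tilde u}_{L^2(I;H)}^2\big)
\end{equation*}
with $C$ depending only on $\alpha,\omega,L$. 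Hence, once $\tilde v_n\to\tilde u$ in $L^2(I;H)$ is known, $(\tilde v_n)$ is bounded in $L^2(I;V)$, so $\tilde u\in L^2(I;V)$ (whence $Pu=u-\tilde u\in L^2(I;V)$) and $\tilde v_n\rightharpoonup\tilde u$ in $L^2(I;V)$. Then $R_n\to 0$, because $F(\cdot,v_n)\to F(\cdot,Pu)$ strongly in $L^2(I;V')$ by \eqref{eq:LipEst} (as $v_n\to Pu$ in $L^2(I;H)$) while $\tilde v_n$ stays bounded in $L^2(I;V)$; and by weak lower semicontinuity of the nonnegative quadratic form $v\mapsto\int_a^b\big(\Re\fra(s,v,v)+\omega\norm{v}_H^2\big)\,\mathrm{d}s$ the form term passes to the limit. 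Taking $\limsup$ thus reproduces inequality \eqref{eq:boundforu}, and from here the argument is word-for-word the linear one: inserting $u'=F(\cdot,Pu)-\A u$ via Lemma~\ref{lem:invariance}, the terms $-2\int\Re\fra(s,\tilde u,\tilde u)$ cancel and one is left with $0\ge\int_J\Re\langle F(\cdot,Pu)-\A Pu,\tilde u\rangle\,\mathrm{d}s$ for every subinterval $J\subset I$; Lebesgue's differentiation theorem then gives the claimed pointwise inequality.

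The main obstacle is the $L^2(I;H)$ convergence $\tilde v_n\to\tilde u$, which in the linear proof rested on Lemma~\ref{lem:L^2convergency} applied to the fixed forcing $f$. Here the forcing of $v_{n,k}$ is $F(\cdot,v_{n,k})$ and depends on the unknown, so the first task is a \emph{uniform a priori bound}: since $Pu\in C(I;H)$, the initial data $Pu(t_{k-1}^n)$ are bounded in $H$ uniformly in $n,k$, and a standard energy estimate for $v_{n,k}'+\A v_{n,k}=F(\cdot,v_{n,k})$ (testing with $v_{n,k}$, using \eqref{eq:LipEst}, $H$-ellipticity, Young and Gronwall on the subinterval, whose length is at most $b-a$) bounds $\sup_t\norm{v_{n,k}(t)}_H$, hence $\norm{F(\cdot,v_{n,k})}_{L^2(t_{k-1}^n,t_k^n;V')}$, uniformly in $n,k$. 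With this bound Lemma~\ref{lem:L^2convergency} applies to each $v_{n,k}$ exactly as in the linear case; summing the per-subinterval estimates over $k$, telescoping (using $v_{n,k}(t_{k-1}^n)=Pu(t_{k-1}^n)$ and that $P$ is a contraction), and absorbing the $\norm{\tilde v_n}_{L^2(I;H)}^2$ appearing on the right via the reverse triangle inequality for $n$ large, shows $\norm{\tilde v_n-\tilde u}_{L^2(I;H)}^2 = O(1/n)$, completing the proof.
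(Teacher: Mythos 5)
Your proposal follows the same architecture as the paper's proof of Theorem~\ref{thm:necInvSemi}: the same partition, the same subinterval semilinear solutions $v_{n,k}$ started at $Pu(t_{k-1}^n)$, invariance plus the best-approximation property to compare $\tilde u$ with $\tilde v_{n,k}$, the telescoping energy estimate, passage to the limit to reach \eqref{eq:boundforu2}, and finally Lemma~\ref{lem:invariance} together with Lebesgue differentiation. You deviate in two technical steps, and both variants work. First, for the limit passage the paper splits off an $\varepsilon$-portion of the form to absorb the Lipschitz term, keeps the remaining $(2-\varepsilon)$-multiple of the form for the weak lower semicontinuity argument, and lets $\varepsilon \downarrow 0$ at the very end; you instead use full coercivity for an a priori $L^2(I;V)$ bound, show the extra term $R_n \to 0$ by pairing the strong $L^2(I;V')$ convergence of $F(\cdot,v_n)$ with the bounded sequence $\tilde v_n$, and invoke weak lower semicontinuity of $v \mapsto \int_a^b\bigl(\Re\fra(s,v,v)+\omega\norm{v}_H^2\bigr)\,\mathrm{d}s$; this is an equivalent, arguably cleaner, bookkeeping. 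Second, for the $L^2(I;H)$ convergence the paper needs no uniform a priori bound on the $v_{n,k}$ at all: since $v_n$ takes values in $\Conv$, \eqref{eq:LipEst} and contractivity of $P$ give $\norm{F(\cdot,Pu)-F(\cdot,v_{n,k})}_{V'} \le L\norm{\tilde v_{n,k}}_H$, so the variable forcing simply contributes another multiple of $\norm{\tilde v_n}_{L^2(I;H)}^2$ and is handled by the same reverse-triangle absorption as in the linear case; your energy/Gronwall estimate is correct but superfluous. One caveat on your last step: Lemma~\ref{lem:L^2convergency} must be applied to the differences $\tilde v_{n,k} = u - v_{n,k}$ (and to $u$), exactly as in the proof of Theorem~\ref{thm:necInv}, not to $v_{n,k}$ itself. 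The endpoint terms telescope only because $\tilde v_{n,k}(t_{k-1}^n) = \tilde u(t_{k-1}^n)$ and $\norm{\tilde v_{n,k}(t_k^n)}_H \ge \norm{\tilde u(t_k^n)}_H$; the corresponding sum $\sum_{k}\bigl(\norm{v_{n,k}(t_{k-1}^n)}_H^2 - \norm{v_{n,k}(t_k^n)}_H^2\bigr)$ for the $v_{n,k}$ themselves does not telescope and is in general only $O(n)$, which the prefactor $1/n$ cannot beat. Since your phrase ``exactly as in the linear case'' and your telescoping ingredients match the paper's, I read this as a presentational slip rather than a gap, but in a written version you should make explicit which function the lemma is applied to.
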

\begin{proof}
	For $n \in \N$ and $k \in \{0,1, \dots, n\}$, let $t_{k}^n := a+ \frac{k}{n(b-a)}$.
	Let $v_{n,k} \in \MRs([t_{k-1}^n, t_{k}^n])$ 
	be the solution of $v_{n,k}'+ \A v_{n,k} = F(\cdot, v_{n,k})$, $v_{n,k}(t_{k-1}^n)= Pu(t_{k-1}^n)$ and $v_n \in L^2(I; V)$,
	$v_n(t) := v_{n,k}(t)$ for $t \in [t_{k-1}^n, t_{k}^n)$, $k \in \{1, \dots, n \}$. 
	Let $k \in \{1,\dots,n\}$. Since $(\fra, F)$ is $\Conv$ invariant we have that $v_{n,k}(t) \in \Conv$ for all $t \in  [t_{k-1}^n,t_k^n]$. 
	Thus $\norm{u(t)-Pu(t)}_H \le \norm{u(t)-v_{n,k}(t)}_H$ for all $t \in [t_{k-1}^n,t_k^n]$.
	Let $\varepsilon \in (0,2)$. We set $\tilde u := u-Pu$, $\tilde v_{n,k} := u-v_{n,k}$ and $\tilde v_n := u-v_n$ and obtain
	\begin{multline}\label{eq:boundforv_n2}
		\norm{\tilde u(b)}^2_H-\norm{\tilde u(a)}^2_H = \sum_{k=1}^n \left(\norm{\tilde u(t_{k}^n)}^2_H-\norm{\tilde u(t_{k-1}^n)}^2_H\right)\\
		\le \sum_{k=1}^n \left(\norm{\tilde v_{n,k}(t_{k}^n)}^2_H-\norm{\tilde v_{n,k}(t_{k-1}^n)}^2_H\right)
		= \sum_{k=1}^n 2 \Re \int_{t_{k-1}^n}^{t_{k}^n} \langle \tilde v_{n,k}', \tilde v_{n,k} \rangle \ \mathrm{d}{s}\\
		= \int_a^b 2\Re\langle F(\cdot, u)-F(\cdot, v_n),\tilde v_n \rangle \ \mathrm{d}{s}-\int_a^b 2\Re\fra(s, \tilde v_n, \tilde v_n) \ \mathrm{d}{s}\\
		\le 2 L \int_a^b \norm{\tilde v_n}_H \norm{\tilde v_n}_V \ \mathrm{d}{s} -\int_a^b 2\Re\fra(s, \tilde v_n, \tilde v_n) \ \mathrm{d}{s}\\
			\le 	2 L \int_a^b \norm{\tilde v_n}_H \norm{\tilde v_n}_V \ \mathrm{d}{s} 
				-\varepsilon  \int_a^b \left(\alpha \norm{\tilde v_n}_V^2 -\omega \norm{\tilde v_n}_H^2 \right) \ \mathrm{d}{s} \\ 
					-\int_a^b (2-\varepsilon)\Re\fra(s, \tilde v_n, \tilde v_n) \ \mathrm{d}{s}\\
				\le \int_a^b \left(\varepsilon\omega + \tfrac{L^2}{\varepsilon\alpha} \right)\norm{\tilde v_n}_H^2  \ \mathrm{d}{s} -\int_a^b (2-\varepsilon)\Re\fra(s, \tilde v_n, \tilde v_n) \ \mathrm{d}{s}
	\end{multline}
	where we used Lemma~\ref{lem:productrule} in the equality in the second line, \eqref{eq:LipEst} and that $P$ is a contraction for the second inequality, 
	$H$-ellipticity of $\fra$ in the third inequality and Young's inequality ($2 xy \le \varepsilon \alpha x^2+ \frac 1 {\varepsilon \alpha} y^2,\, x,y \in \R$) in the last inequality.
	Suppose that $\tilde v_n \to \tilde u$ in $L^2(I; H)$, then \eqref{eq:boundforv_n2} yields
	$\tilde u \in L^2(I; V)$ with $\tilde v_n \rightharpoonup \tilde u$ in $L^2(I; V)$ and
	\begin{equation*}
		\norm{\tilde u(b)}^2_H-\norm{\tilde u(a)}^2_H \le
		 -\int_a^b (2-\varepsilon)\Re\fra(s, \tilde u, \tilde u) \ \mathrm{d}{s}.
	\end{equation*}
	Since $\varepsilon\in(0,2)$ was arbitrary we let $\varepsilon \downarrow 0$ and obtain
	\begin{equation}\label{eq:boundforu2}
		\norm{\tilde u(b)}^2_H-\norm{\tilde u(a)}^2_H \le
		 -\int_a^b 2\Re\fra(s, \tilde u, \tilde u) \ \mathrm{d}{s}.
	\end{equation}
	We combine \eqref{eq:boundforu2} and Lemma~\ref{lem:invariance}, thus
	\begin{multline*}
		-\int_a^b 2\Re\fra(s, \tilde u, \tilde u) \ \mathrm{d}{s} \ge \norm{\tilde u(b)}^2_H-\norm{\tilde u(a)}^2_H \\
			= 2 \int_a^b \Re \langle{u'}, {\tilde u}\rangle \ \mathrm{d} s 
			= 2 \int_a^b \Re \langle{F(\cdot, Pu)-\A u}, {\tilde u}\rangle \ \mathrm{d} s.
	\end{multline*}
	Hence
	\[
	 	0 \ge\int_a^b \Re \langle{F(\cdot, P u)-\A P u}, {\tilde u}\rangle \ \mathrm{d} s.
	\]
	Note that this inequality holds also if we integrate over any interval $J \subset I$ instead of $I$ with a simple modification of the argument above.
	This finishes the proof if $\tilde v_n \to \tilde u$ in $L^2(I;H)$. We have
	\begin{multline*}
		\int_{t_{k-1}^n}^{t_{k}^n}\norm{\tilde v_n- \tilde u}^2_{H} \ \mathrm{d}{s}\\
			\le 3\int_{t_{k-1}^n}^{t_{k}^n}\norm{\tilde v_{n,k}- \tilde v_{n,k}(t_{k-1}^n)}^2_{H} \ \mathrm{d}{s} 
				+ 6\int_{t_{k-1}^n}^{t_{k}^n}\norm{u(t_{k-1}^n)- u}^2_{H} \ \mathrm{d}{s}\\
			\le \frac {3C} {n(b-a)} \Big( \norm{\tilde v_{n,k}(t_{k-1}^n)}^2_{H} - \norm{\tilde v_{n,k}(t_{k}^n)}^2_{H} 
				+\norm{\tilde v_{n,k}}_{L^2(t_{k-1}^n,t_{k}^n;H)}^2 \\ + \norm{F(\cdot, Pu)-F(\cdot, v_{n,k})}_{L^2(t_{k-1}^n,t_{k}^n;V')}^2\Big)\\
				+\frac {6C}  {n(b-a)} \Big( \norm{u(t_{k-1}^n)}^2_{H} - \norm{u(t_{k}^n)}^2_{H} \\ + \norm{u}_{L^2(t_{k-1}^n,t_{k}^n;H)}^2 
					+ \norm{F(\cdot, Pu)}_{L^2(t_{k-1}^n,t_{k}^n;V')}^2 \Big),
	\end{multline*}
	where we use that $Pu(t_{k-1}^n) = v_{n,k}(t_{k-1}^n)$ and that $P$ is a contraction in the first estimate and Lemma~\ref{lem:L^2convergency} in the second estimate.
	We take the sum over $k$ from $1$ to $n$ and obtain by the first estimate of \eqref{eq:boundforv_n} and by \eqref{eq:LipEst} and the contractivity of $P$
	\begin{multline*}
		\int_a^b\norm{\tilde v_n- \tilde u}^2_{H} \ \mathrm{d}{s} 
			\le\frac {3 C}  {n(b-a)} \Big( \norm{\tilde u(a)}^2_H-\norm{\tilde u(b)}^2_H +(1+L)\norm{\tilde v_n}_{L^2(I;H)}^2\\
				 + 2\norm{u(a)}^2_H-2\norm{u(b)}^2_H + 2\norm{u}_{L^2(I;H)}^2 +2\norm{F(\cdot, Pu)}_{L^2(I;V')}   \Big).
	\end{multline*}
	By the reverse triangle inequality it follows that $\norm{\tilde v_n}_{L^2(I;H)}^2$ is bounded. Thus $\tilde v_n \to \tilde u$ in $L^2(I;H)$.
\end{proof}
%%%%%%%%%%%%%%%%%%
Next we also want to deduce a pointwise version in the semilinear setting, which is in the spirit of the Beurling--Deny--Ouhabaz criterion.
Again we use regularity assumptions as in Section~\ref{sec:nec}.
\begin{corollary}\label{cor:pointwiseestSemi}
	Suppose that $\fra$ and $F(\cdot, v)$, $v \in V$ are right-continuous and
	there exists a dense subspace $\tilde V$ of $V$, such that for every $c \in I$, $u_c \in \tilde V$ 
	the solution $u \in \MRs([c,b])$ of $u'+\A u = F(\cdot, Pu)$, $u(c)=u_c$ is in $C([c,b]; V)$.
	Then $(\fra, F)$ is $\Conv$ invariant if and only if $PV \subset V$ and 
	\begin{equation*}
		\Re \fra(t, Pv, v-Pv) \ge  \Re \langle{F(t,Pv)}, {v-Pv}\rangle \quad (t \in I,\, v \in V).
	\end{equation*}
\end{corollary}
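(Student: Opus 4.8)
The plan is to follow the scheme of the proof of Corollary~\ref{cor:pointwiseest}, splitting into sufficiency and necessity and replacing the source term $f$ by $F(\cdot, Pu)$ throughout, while invoking Theorem~\ref{thm:necInvSemi} in place of Theorem~\ref{thm:necInv} and Corollary~\ref{cor:invsemi} in place of Corollary~\ref{cor:invariance}. As in the linear case, sufficiency will require no regularity hypotheses, whereas the regularity assumptions enter only in the necessity direction.

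For sufficiency, suppose $PV \subset V$ and the pointwise inequality holds for all $v \in V$ and a.e.\ $t$. Given $c \in I$ and a solution $w \in \MRs([c,b])$ of $w' + \A w = F(\cdot, w)$ with $w(c) \in \Conv$, I would let $u$ be the solution of $u' + \A u = F(\cdot, Pu)$, $u(c) = w(c)$; this exists and is unique by Proposition~\ref{prop:semilinearsolution} applied to the map $(t,v) \mapsto F(t,Pv)$, which still satisfies \eqref{eq:LipEst} because $P$ is a contraction. Corollary~\ref{cor:invsemi}, applied on $[c,b]$, then yields $u(t) \in \Conv$, whence $Pu = u$ and $u$ also solves $u' + \A u = F(\cdot, u)$; uniqueness in Proposition~\ref{prop:semilinearsolution} forces $u = w$, so $w$ stays in $\Conv$.

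For necessity, I would fix an arbitrary $c \in [a,b)$, take $u_c \in \tilde V$, and let $u$ be the solution of $u' + \A u = F(\cdot, Pu)$, $u(c) = u_c$, which lies in $C([c,b];V)$ by hypothesis. Theorem~\ref{thm:necInvSemi}, applied on $[c,b]$, gives $Pu \in L^2([c,b];V)$ together with the pointwise inequality with right-hand side $\Re\langle F(t, Pu(t)), u(t)-Pu(t)\rangle$ for a.e.\ $t \in [c,b]$. Choosing $t_n \downarrow c$ in this co-null set, I would first invoke Lemma~\ref{lem:invbound} to bound $(Pu(t_n))$ in $V$ — the required bound on $\norm{F(t_n, Pu(t_n))}_{V'}$ following from \eqref{eq:LipEst} together with right-continuity of $F(\cdot, 0)$ — so that $Pu(c) \in V$ and $Pu(t_n) \rightharpoonup Pu(c)$ in $V$, the weak limit being identified through the strong convergence $Pu(t_n) \to Pu(c)$ in $H$. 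Passing to the limit in the pointwise inequality, using that $v \mapsto \Re\fra(c,v,v) + \omega\norm{v}_H^2$ is an equivalent norm on $V$ and hence weakly lower semicontinuous, together with right-continuity of $\fra$, exactly as in Corollary~\ref{cor:pointwiseest}, yields the inequality at $t = c$ for $v = u_c$. Since $c \in [a,b)$ and $u_c \in \tilde V$ are arbitrary, this gives $P\tilde V \subset V$ and the inequality for all such $c$ and $v \in \tilde V$; a density argument, replacing the role of $u(t_n)$ by an approximating sequence in $\tilde V$, then extends it to all $v \in V$.

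The main obstacle, and the only genuinely new point compared with Corollary~\ref{cor:pointwiseest}, is the convergence $F(t_n, Pu(t_n)) \to F(c, Pu(c))$ in $V'$, which must replace the simple choice of Lebesgue points of $f$. I would obtain it by splitting the difference as $[F(t_n, Pu(t_n)) - F(t_n, Pu(c))] + [F(t_n, Pu(c)) - F(c, Pu(c))]$: the first bracket is controlled by $L\norm{Pu(t_n) - Pu(c)}_H \to 0$ via \eqref{eq:LipEst}, and the second vanishes by right-continuity of $F(\cdot, Pu(c))$, which is available precisely because $Pu(c) \in V$ has already been secured. With this convergence in hand, the pairing $\langle F(t_n, Pu(t_n)), u(t_n) - Pu(t_n)\rangle$ converges (strong in $V'$ against weak in $V$), and the remainder of the limiting computation is identical to the linear case.
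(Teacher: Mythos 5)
Your proposal is correct and follows essentially the same route as the paper: necessity via Theorem~\ref{thm:necInvSemi}, Lemma~\ref{lem:invbound}, the weak-convergence/right-continuity limiting argument at $t=c$, and a final density step from $\tilde V$ to $V$, with sufficiency supplied by Corollary~\ref{cor:invsemi}. If anything, you are slightly more careful than the paper's own write-up: the paper works with solutions of $u'+\A u = F(\cdot,u)$ and only asserts $F(t_n,u(t_n))\to F(c,u(c))$, whereas your consistent use of $u'+\A u=F(\cdot,Pu)$ (matching the corollary's regularity hypothesis and Theorem~\ref{thm:necInvSemi}), your two-term splitting to get $F(t_n,Pu(t_n))\to F(c,Pu(c))$ in $V'$ after first securing $Pu(c)\in V$, and your explicit uniqueness argument identifying the projected and unprojected solutions in the sufficiency direction are precisely the details needed to make the scheme airtight.
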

For example, if  $F \colon I \times H \to H$ satisfies $F(\cdot, v) \in L^2(I;H)$ for every $v \in H$ and there exists a constant $L>0$ such that
\begin{equation*}
	\norm{F(t,v)- F(t,w)}_{H} \le L \norm{v-w}_H \quad (t\in I,\, v,w \in H).
\end{equation*}
and $\fra$ is of bounded variation and symmetric (see \cite{Die15}) or $\fra$ is Lipschitz continuous and satisfies $D(A^{1/2})=V$, where $A$ is the part of $\A(0)$ in $H$ (see \cite{ADLO14}), then the assumptions of the corollary above are satisfied.

Before we prove the corollary we state a simple, autonomous version of it.
The assumption $D(A^{1/2})=V$ is called \emph{Kato's square root property}.
For example by \cite{AT03} it is satisfied for elliptic operators in divergence form on Lipschitz domains with Dirichlet or Neumann boundary condition.

\begin{corollary}\label{cor:pointwiseest2}
	Let $F \colon H \to H$ be Lipschitz continuous. Suppose that $\fra$ is autonomous, i.e.\ $\fra(\cdot,v,w)$ is constant for all $v,w \in V$, and that $D(A^{1/2})=V$.
	Then $(\fra, F)$ is $\Conv$ invariant if and only if
	\[
		\Re \fra(Pv, v-Pv) \ge  \Re \langle{F(Pv)}, {v-Pv}\rangle \quad (v \in V).
	\]
\end{corollary}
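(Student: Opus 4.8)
The plan is to obtain Corollary~\ref{cor:pointwiseest2} as a special case of Corollary~\ref{cor:pointwiseestSemi}, so that the only genuine work is to verify the three hypotheses of the latter in the present autonomous setting. First I would dispose of the easy points. Since $\fra(\cdot,v,w)$ is constant, the map $t \mapsto \A(t)$ is constant and $\fra$ is trivially right-continuous; likewise $F$ is independent of $t$, so $F(\cdot,v)$ is constant, hence right-continuous, and $F(\cdot,v) \in L^2(I;V')$. Moreover, composing with the continuous embedding $H \hookrightarrow V'$ turns the Lipschitz bound for $F\colon H \to H$ into the estimate \eqref{eq:LipEst} with values in $V'$; since $P$ is a contraction, $F \circ P$ is again Lipschitz, so Proposition~\ref{prop:semilinearsolution} applies and the Cauchy problems appearing below are well posed.

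The substantial point is the regularity assumption: I must exhibit a dense subspace $\tilde V \subset V$ such that the solution $u$ of $u' + \A u = F(\cdot, Pu)$, $u(c) = u_c \in \tilde V$, lies in $C([c,b];V)$. I would take $\tilde V = V = D(A^{1/2})$. Let $u$ be the (unique) such solution and set $g := F(Pu)$. Because $u \in C([c,b];H)$, $P$ is a contraction and $F$ is Lipschitz, we have $g \in C([c,b];H) \subset L^2([c,b];H)$. Hence $u$ solves the \emph{linear} autonomous problem $u' + Au = g$ with $g \in L^2([c,b];H)$, where $A$ is the part of $\A$ in $H$. Since $\fra$ is bounded and $H$-elliptic, $-A$ generates an analytic semigroup on the Hilbert space $H$, so by de Simon's theorem $L^2$-maximal regularity holds: for $u_c$ in the trace space the solution belongs to $H^1([c,b];H) \cap L^2([c,b];D(A))$, and this space embeds continuously into $C([c,b];(H,D(A))_{1/2,2})$. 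It remains to identify the trace space $(H,D(A))_{1/2,2}$ with $V$, which is where Kato's square root property $D(A^{1/2}) = V$ enters.

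The crux, and the step I expect to be the main obstacle, is precisely this identification $(H,D(A))_{1/2,2} = D(A^{1/2}) \,(=V)$. In the self-adjoint (symmetric) case it follows directly from the spectral theorem, but in general it rests on $A$ having a bounded $H^\infty$-calculus (equivalently, bounded imaginary powers), which for the form-associated m-sectorial operator $A$ on the Hilbert space $H$ yields $(H,D(A))_{1/2,2} = [H,D(A)]_{1/2} = D(A^{1/2})$. Rather than reproving this, I would invoke the regularity result already quoted after Corollary~\ref{cor:pointwiseestSemi}: an autonomous form is in particular Lipschitz continuous in $t$, so the situation \emph{$\fra$ Lipschitz continuous with $D(A^{1/2}) = V$} of \cite{ADLO14} applies and delivers $u \in C([c,b];V)$ directly, giving the required regularity with $\tilde V = V$. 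With all hypotheses of Corollary~\ref{cor:pointwiseestSemi} verified, its conclusion is an equivalence characterizing $\Conv$-invariance, and in our time-independent setting it reads $PV \subset V$ together with $\Re\fra(Pv,v-Pv) \ge \Re\langle F(Pv), v-Pv\rangle$ for every $v \in V$, which is exactly the asserted statement; both implications follow at once.
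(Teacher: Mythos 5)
Your proposal is correct and takes essentially the same route as the paper: the paper also obtains Corollary~\ref{cor:pointwiseest2} as a special case of Corollary~\ref{cor:pointwiseestSemi} (whose proof is the one displayed), with the regularity hypothesis satisfied for $\tilde V = V$ precisely by the \cite{ADLO14} result quoted after that corollary, since an autonomous form is trivially Lipschitz continuous in time and $F\circ P$ is Lipschitz from $H$ to $H$. Your de Simon/trace-space sketch is a correct but unneeded detour, as you yourself conclude by falling back on \cite{ADLO14}.
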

\begin{proof}
	Let $c\in [a,b)$, $u_c \in \tilde V$ and $u \in \MRs([c,b])$ be the solution of $u'+ \A u = F(\cdot, u)$, $u(c)=u_c$.
	By Theorem~\ref{thm:necInv} we obtain that $Pu \in L^2([c,b];V)$ and that there exists a nullset $N \subset [c,b]$ such that
	\begin{equation}\label{eq:pointwNecSemi}
		\Re \fra(t, Pu(t), u(t)-Pu(t)) \ge  \Re \langle{F(t,Pu(t))}, {u(t)-Pu(t)}\rangle
	\end{equation}
	for $t \in [c,b]\setminus N$.
	Let $(t_n)_{n\in\N} \subset (c,b] \setminus N$ be a sequence such that $t_n \downarrow c$, then $F(t_n,u(t_n)) \to F(c, u(c))$ in $V'$ for $n\to\infty$.
	By Lemma~\ref{lem:invbound} we obtain that $(Pu(t_n))_{n \in \N}$ is bounded in $V$, thus we conclude that
	that $Pu(c)\in V$ and $Pu(t_n) \rightharpoonup Pu(c)$ in $V$.
	Thus
	\begin{multline*}
		\Re \fra(c,u(c)-Pu(c), u(c)-Pu(c))\\
			\le \limsup_{n\to\infty} \Re \fra(t_n,u(t_n)-Pu(t_n), u(t_n)-Pu(t_n))\\
				\le \limsup_{n\to\infty} \Re \langle \A(t_n) u(t_n) - F(t_n,Pu(t_n)), u(t_n)-Pu(t_n) \rangle\\
					=  \Re \langle \A(c) u(c) - F(c,Pu(c)), u(c)-Pu(c) \rangle,
	\end{multline*}
	where we used that $\fra$ is right-continuous and $v \mapsto \Re\fra(c, v, v)+\omega \norm v_H^2$ is an equivalent norm on $V$ for the first inequality
	and \eqref{eq:pointwNecSemi} with $t=t_n$ for the second inequality.
	This shows  $P\tilde V \subset  V$ and
	\begin{equation*}
		\Re \fra(t, Pv, v-Pv) \ge  \Re \langle{F(t,Pv)}, {v-Pv}\rangle \quad (t\in [a,b),\, v \in \tilde V).
	\end{equation*}
	Finally let $v \in V$ and $(v_n)_{n\in\N} \subset \tilde V$, $v_n \to v$ in $V$. 
	With a similar argument as above (where we replace the role of $u(t_n)$ by $v_n$ and $u(c)$ by $v$) we obtain the assertion of the corollary.
\end{proof}

%%%%%%%%%%%%%%%%%%%%%%%%%%%%%%%%%%%%%%%%%%%%%%%%%%%%%%%%%%%%%%%%%%%%%%%%%%%%%%%%%%%%%%%%%%%%%%%%%%%
\section{An illustrating example}\label{section:Example}

In this section we show by an example how the invariance criterion Theorem~\ref{thm:inv} can be applied.
We consider an elliptic operator of second order with time-dependent coefficients.  
Let $\Omega \subset \R^d$ be open and bounded, let $H$ be the real Hilbert space $L^2(\Omega)$ and let
$V$ be $H^1(\Omega)$ if we consider Neumann boundary conditions or $H_0^1(\Omega)$ for Dirichlet boundary conditions.
Let $T>0$.
We assume that $a_{jk} \colon [0,T] \times \Omega \to [0, \infty)$, $j,k \in \{ 1, \dots, d \}$ are measurable, bounded by some constant $M\ge 0$ and that there exists some constant $\alpha >0$ such that
\[
	\sum_{j,k=1}^d a_{jk}(t,x) \xi_j \xi_k \ge \alpha |\xi |^2 \quad (\xi = (\xi_1, ..., \xi_d) \in \R^d)
\]
for a.e.\ $(t,x) \in [0, T] \times \Omega$.
We define $\fra \colon [0,T] \times V \times V \to \R$ by
\[
	\fra(t,v,w) = \sum_{j,k=1}^d \int_\Omega a_{jk}(t,x) \partial_j v \partial_k w \ \mathrm{d} x.
\]
Then $\fra \in \Form([0,T];V,H)$, where $\eqref{eq:Vbounded}$ holds with the same constant $M$ and
\eqref{eq:qcoercive} holds with the same $\alpha$ and $\omega = \alpha$.

Let $F \colon \R \to \R$ be \emph{locally Lipschitz}, i.e.\ $F$ is Lipschitz continuous on bounded subsets of $\R$ and suppose that $F(0)=F(1)=0$.

\begin{proposition}
	For every $u_0 \in H$ with $u_0 \in [0,1]$ a.e., there exists a unique $u \in \MRs([0,T])$ such that $u(t) \in [0,1]$ a.e.\ for every $t \in [0,T]$ and
	\begin{equation}\label{eq:semilinearinv}
				u'(t)+ \A u(t) = F(u(t)) \quad (\text{a.e.\ }t\in[0,T]), \quad u(0)=u_0.
	\end{equation}
\end{proposition}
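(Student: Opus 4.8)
The plan is to realize $[0,1]$-invariance through Corollary~\ref{cor:invsemi}, handling the merely local Lipschitz continuity of $F$ by truncation. Put
\[
\Conv := \{ v \in H : 0 \le v \le 1 \text{ a.e.}\},
\]
a closed convex set containing $u_0$, whose orthogonal projection $P \colon H \to \Conv$ is the pointwise truncation $(Pv)(x) = \pi(v(x))$ with $\pi(s):=(s\vee 0)\wedge 1$. Since $F$ is locally Lipschitz, it is Lipschitz on the bounded set $[0,1]$, say with constant $L$; as $\pi$ is a $1$-Lipschitz map of $\R$ onto $[0,1]$, the function $\tilde F := F\circ\pi$ is globally Lipschitz with constant $L$ and coincides with $F$ on $[0,1]$. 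Because $\pi(0)=0$, the induced Nemytskii operator $\hat F\colon H\to H$, $(\hat F v)(x):=\tilde F(v(x))$, is globally Lipschitz from $H$ to $H$, hence from $H$ to $V'$ through $H\hookrightarrow V'$, and it is independent of $t$; thus the hypotheses of Proposition~\ref{prop:semilinearsolution} hold for $\hat F$.

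By Proposition~\ref{prop:semilinearsolution} there is a unique $u\in\MRs([0,T])$ with $u'+\A u=\hat F(u)$, $u(0)=u_0$. Since $\pi\circ\pi=\pi$ we have $\hat F(u)=\hat F(Pu)=G(Pu)$, where $G(w):=\hat F(w)$ for $w\in\Conv$, so this is precisely the semilinear problem treated in Corollary~\ref{cor:invsemi}. I verify its two hypotheses. The truncation $v\mapsto\pi(v)$, being post-composition with a Lipschitz function fixing $0$, maps $H^1(\Omega)$ and $H_0^1(\Omega)$ into themselves, so $PV\subset V$. For the form inequality I claim both sides vanish for every $v\in V$. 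Indeed $\nabla(Pv)=\nabla v\,\1_{\{0<v<1\}}$ and $\nabla(v-Pv)=\nabla v\,\1_{\{v<0\}\cup\{v>1\}}$ a.e.\ (using $\nabla v=0$ a.e.\ on $\{v=0\}$ and on $\{v=1\}$), so these gradients have disjoint supports and $\partial_j(Pv)\,\partial_k(v-Pv)=0$ a.e.; integrating the definition of $\fra$ gives $\fra(t,Pv,v-Pv)=0$. On the other side $v-Pv=0$ on $\{0\le v\le 1\}$, while $Pv=0$ on $\{v<0\}$ and $Pv=1$ on $\{v>1\}$, so $\tilde F(Pv)\,(v-Pv)=0$ a.e.\ because $F(0)=F(1)=0$; hence $\Re\langle G(Pv),v-Pv\rangle=0$. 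The required inequality therefore holds as the equality $0\ge 0$.

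Corollary~\ref{cor:invsemi} now gives $u(t)\in\Conv$, i.e.\ $0\le u(t)\le 1$ a.e., for every $t\in[0,T]$, and that $u$ solves $u'+\A u=G(u)=\hat F(u)$. As $u(t)\in[0,1]$ a.e.\ we have $\hat F(u)=\tilde F(u)=F(u)$, so $u$ is a solution of \eqref{eq:semilinearinv}. For uniqueness, any $u\in\MRs([0,T])$ solving \eqref{eq:semilinearinv} with $u(t)\in[0,1]$ a.e.\ and $u(0)=u_0$ satisfies $F(u)=\hat F(u)$, hence solves $u'+\A u=\hat F(u)$ and coincides with the above by the uniqueness in Proposition~\ref{prop:semilinearsolution}. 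I expect the main obstacle to be the pair of facts $PV\subset V$ and $\fra(t,Pv,v-Pv)=0$, both of which rest on truncation being compatible with the Dirichlet-form structure; it is the normalization $F(0)=F(1)=0$ that turns the invariance inequality, which is in general strict, into an equality here.
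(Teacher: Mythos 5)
Your proof is correct and follows essentially the same route as the paper: the same convex set $\Conv$ and truncation projection $P$, the same verification that $\fra(t,Pv,v-Pv)=0$ and $\langle F(Pv),v-Pv\rangle=0$ via $F(0)=F(1)=0$, then Proposition~\ref{prop:semilinearsolution} applied to the projected nonlinearity followed by Corollary~\ref{cor:invsemi}, with uniqueness inherited from the fixed-point result. The only difference is that you spell out explicitly (via $\tilde F=F\circ\pi$) why the projected nonlinearity is globally Lipschitz despite $F$ being only locally Lipschitz, a point the paper leaves implicit.
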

Note that $x \mapsto F(u(t)(x)) \in H$ for every $t \in [0,T]$ since $\Omega$ is bounded and $\MRs([0,T]) \hookrightarrow C([0,T];H)$.
Thus \eqref{eq:semilinearinv} makes sense.
For example, if $F(x)=x(1-x)$, then \eqref{eq:semilinearinv} is a diffusion equation with logistic groth.
\begin{proof}
	Let $\Conv := \{g \in H : g \in [0,1] \text{ a.e.}\}$. Then $\Conv$ is a closed and convex subset of $H$
	and the orthogonal projection $P \colon H \to H$ onto $\Conv$ is given by $Pg(x)=\max\{\min\{g(x),1\},0\} = \min\{\max\{g(x),0\},1\}$.
	Moreover, $g-Pg = (g-1)_+ - (-g)_+$.
	Thus $(F(Pg) \mid g-Pg)_H = 0$.
	Let $v \in V$, then $\nabla Pv = \nabla v \1_{\{ 0\le v \le 1 \}}$ and $\nabla (v-Pv) = \nabla v \1_{\{ v < 0\} \cup \{ v > 1\}}$.
	Thus $\fra(t, Pv, v-Pv) = 0$ for every $t \in [0,T]$.
	Finally, Proposition~\ref{prop:semilinearsolution} yields a unique solution $u\in \MRs([0,T])$ of
	\begin{equation*}
				u'(t)+ \A u(t) = F(Pu(t)) \quad (\text{a.e.\ }t\in[0,T]), \quad u(0)=u_0,
	\end{equation*}
	which is in $\Conv$ for ever $t \in [0,T]$ by Corollary~\ref{cor:invsemi}. Thus $Pu(t)=u(t)$ for every $t \in [0,T]$ and hence $u$ is our desired unique solution.
\end{proof}

%%%%%%%%%%%%%%%%%%%%%%%%%%%%%%%%%%%%%%%%%%%%%%%%%%%%%%%%%%%%%%%%%%%%%%%%%%%%%%%%%%%%%%%%%%%%%%%%%%%

\noindent
\emph{Dominik Dier}, Institute of Applied Analysis, 
University of Ulm, 89069 Ulm, Germany,
\texttt{dominik.dier@uni-ulm.de}

\end{document}